\documentclass[12pt]{article}

\usepackage{algorithm}
\usepackage{algorithmic}
\usepackage{amsfonts,dsfont}
\usepackage{hyperref}
\usepackage{cite}
\usepackage{color}
\usepackage{amsmath, amssymb}
\usepackage{amsthm}
\usepackage{bm}
\usepackage{color}
\usepackage{comment}
\usepackage{geometry}
\geometry{hmargin=2.5cm,vmargin=2.5cm}
\usepackage{tabularx}
\usepackage{graphicx}
\graphicspath{{figs/}}
\usepackage{subfigure}
\usepackage{placeins}
\makeatletter

\makeatother 
\allowdisplaybreaks

\hypersetup{colorlinks, linkcolor=blue, citecolor=blue, urlcolor=blue}

\theoremstyle{definition}\newtheorem{definition}{Definition}[section]
\theoremstyle{definition}\newtheorem{proposition}[definition]{Proposition}
\theoremstyle{definition}\newtheorem{remark}[definition]{Remark}
\theoremstyle{definition}\newtheorem{theorem}[definition]{Theorem}
\theoremstyle{definition}
\theoremstyle{definition}
\theoremstyle{definition}\newtheorem{example}[definition]{Example}
\theoremstyle{definition}
\theoremstyle{definition}

\newcommand{\bE}{\textbf{E}}
\newcommand{\bS}{\textbf{S}}

\newcommand{\J}{\mathrm{J}}
\newcommand{\td}{\text{d}}

\newcommand{\revise}[1]{{\color{blue}#1}}

\newcommand{\mK}{\mathcal{K}}
\newcommand{\n}{^{(n)}}
\newcommand{\oL}{\overline{L}}

\newcommand{\pp}[2]{\frac{\partial #1}{\partial #2}}
\newcommand{\pr}{\text{pr}\,}

\newcommand{\sm}{^{[m]}}
\newcommand{\sn}{^{[n]}}

\newcommand{\vv}{\mathbf{v}}

\newcommand{\steps}{\operatorname{steps}}
\newcommand{\EOC}[1]{\operatorname{EOC}\left(#1\right)}
\newcommand{\enorm}[1]{\ensuremath{\left\| #1 \right\|_{l_{\infty}}}}
\newcommand{\err}[1]{\ensuremath{e_{#1}}}
\newcommand{\abs}[1]{\ensuremath{\left|#1\right|}}
\renewcommand{\max}[2]{\ensuremath{\underset{#1}{\operatorname{max}}\left(#2\right)}}

\begin{document}
\begin{center}
{\Large{\bf Invariant Variational Schemes\\ for Ordinary Differential Equations}}\\
\end{center}
\vspace{1cm}
\begin{minipage}[t]{0.5\linewidth}
Alex Bihlo\\
Department of Mathematics and Statistics\\ 
Memorial University of Newfoundland\\
St.\ John's, NL, A1C 5S7, Canada\\
 {\tt abihlo@mun.ca}
\end{minipage}
\begin{minipage}[t]{0.5\linewidth}
James Jackaman\\
Department of Mathematical Sciences\\
Norwegian University of Science and Technology (NTNU) \\
7491 Trondheim, Norway \\
{\tt james.jackaman@ntnu.no}
\end{minipage}\\[0.5cm]
\begin{minipage}{0.75\linewidth}
Francis Valiquette \\
Department of Mathematics\\ 
Monmouth University\\ 
West Long Branch, NJ, 07764, USA\\
{\tt fvalique@monmouth.edu}
\end{minipage}\\[0.5cm]

\noindent {\bf Keywords:} Conservative schemes, Lie point symmetries, moving frames, invariant schemes.\\

\noindent {\bf MSC2020 Mathematics subject classification:} 58D19, 65L12, 65L50

\begin{abstract}
We propose a novel algorithmic method for constructing invariant variational schemes of systems of ordinary differential equations that are the Euler--Lagrange equations of a variational principle. The method is based on the invariantization of standard, non-invariant discrete Lagrangian functionals using equivariant moving frames. The invariant variational schemes are given by the Euler--Lagrange equations of the corresponding invariantized discrete Lagrangian functionals. We showcase this general method by  constructing invariant variational schemes of ordinary differential equations that preserve variational and divergence symmetries of the associated continuous Lagrangians.  Noether's theorem automatically implies that the resulting schemes are exactly conservative. Numerical simulations are carried out and show that these invariant variational schemes outperform standard numerical discretizations.
\end{abstract}

\section{Introduction} \label{sec:intro}

The aim of geometric numerical integration is to construct numerical schemes that preserve certain geometric features of differential equations. In doing so, geometric integrators typically provide better global and long term numerical results than comparable non-geometric methods.  Typical examples include, amongst others, symplectic integrators, \cite{BC-2016, HLW-2006, LR-2004,SC-1994}, Lie--Poison structure preserving schemes, \cite{ZM-1988}, energy-preserving methods, \cite{QM-2008}, exactly conservative schemes, \cite{WBN-2016,WBN-2017}, symmetry-preserving methods, \cite{B-2013,BJV-2020,BN-2013,BN-2014,BV-2019}, and variational integrators, \cite{MW-2001}.

In this paper we use the method of moving frames, \cite{MMW-2013,MM-2018,O-2001}, to construct numerical schemes for ordinary differential equations that preserve variational symmetries of Euler--Lagrange equations. An application of Noether's theorem implies that the resulting schemes are necessarily conservative and preserve the associated ``constants of motion."  Such schemes are constructed as follows.  Given the Euler--Lagrange equations $\bE(L)=0$, with variational symmetry group $G$, consider its Lagrangian functional $\mathcal{L}[u]=\int L\, dx$.  We note that this Lagrangian functional is not unique. It can always be scaled by a constant and one can always add a divergence term.  Nevertheless, since $G$ is a variational symmetry group, $\mathcal{L}$ can be chosen so that it is invariant under the action of $G$.  Next, introduce a finite difference approximation $\mathcal{L}^\td = \sum_k\, L_k$ of $\mathcal{L}$.  In general, $\mathcal{L}^\td$ will not be invariant under the product action of $G$.   To obtain a symmetry-preserving Lagrangian, we follow the general procedure in \cite{BV-2017,KO-2004} and compute the invariantized Lagrangian $\iota(\mathcal{L}^\td)$ using the method of equivariant moving frames.  The discrete Euler--Lagrange equations $\bE^\td(\iota(L_k)) = 0$ are then used to approximate the original equations $\bE(L)=0$.  Since $\bE^\td(\iota(L_k)) = 0$ is invariant under the action of $G$, Noether's theorem implies that the scheme is conservative and preserves the constants of motion.  The above procedure can be modified to deal with Lagrangian functionals that admit divergence (Bessel-Hagen) symmetries.  In this case, it suffices to modify the Lagrangian in such a way that divergence symmetries become variational symmetries.

The proposed methodology is related to various other approaches used in geometric numerical integration. As the schemes developed in~\cite{MW-2001}, the proposed methodology is \textit{variational}, in that we discretize the Lagrangian rather than the associated Euler--Lagrange equations. Furthermore, as the schemes proposed in~\cite{B-2013,BJV-2020,BN-2013,BN-2014,BV-2019}, our schemes are \textit{invariant} as well, due to the well-known fact that symmetries of a Lagrangian are also symmetries of the corresponding Euler--Lagrange equations, \cite{O-1993}. Lastly, similar to the exactly conservative schemes derived in \cite{WBN-2016,WBN-2017}, our schemes will also be exactly conservative, thanks to Noether's theorem.  Therefore, our methodology combines several geometric features into one numerical integrator.

The organization of the paper is as follows. In Section \ref{sec:invariant-L} we begin by recalling standard results concerning variational problems and their symmetry groups.   In particular, in Section \ref{ref:div-sym} we explain how a divergence symmetry group can be made into a variational symmetry group by appropriately modifying the Lagrangian.  In Section \ref{sec:discrete-L} we review the theory of discrete variational problems, their symmetries, and Noether's Theorem.  To construct symmetry-preserving discrete Lagrangians, and therefore invariant Euler--Lagrange equations, we use the method of equivariant moving frames, which is summarized in Section \ref{sec:mf}.  In Section \ref{sec-sym-preserving-schemes} we outline the procedure for constructing conservative schemes of Euler--Lagrange equations that preserve their variational/divergence symmetries.  Finally, in Section \ref{sec:num-sim} numerical simulations are carried out that verify numerically the exact conservative nature of the proposed invariant variational schemes.  Also, when compared to ``standard schemes," invariant variational schemes provide better long term numerical results.

\section{Invariant Lagrangians}\label{sec:invariant-L}

In this section we recall standard results concerning invariant variational problems and, more generally, divergent invariant variational problems.  For a detailed exposition, we refer the reader to \cite{O-1993}.  We begin by introducing some notation and terminology.

In this paper we consider ordinary differential equations and let $x \in \mathbb{R}$ denote the independent variable. If $u=(u^1,\ldots,u^q) \in \mathbb{R}^q$ represent the dependent variables, then the \emph{$n$\textsuperscript{th} order jet space of curves} in $\mathbb{R}^q$, denoted $\J\n=\J\n(\mathbb{R}^q,1)$, is locally parametrized by $(x,u\n)$, where $u\n=(u,u_x,\ldots,u_{x^n})$ collects the derivatives $u_{x^k}$ of order $0\leq k \leq n$.

Let $G$ be an $r$-dimensional Lie group acting on $(x,u)\in \mathbb{R}^{q+1}$:
\[
X = g\cdot x,\qquad U=g\cdot u.
\]
The induced action on the horizontal one-form $\mathrm{d}x$ yields the \emph{lifted} horizontal form
\[
\omega = \mathrm{D}_x(X)\, \mathrm{d}x,
\]
where
\[
\mathrm{D}_x = \pp{}{x} + \sum_{\alpha=1}^q \sum_{k\geq 0} u^\alpha_{x^{k+1}}\pp{}{u^\alpha_{x^k}}
\]
denotes the total derivative operator.  

\begin{remark}
More precisely, the lift of $\mathrm{d}x$ should be
\[
\omega = \mathrm{D}_x(X)\, \mathrm{d}x + \sum_{\alpha=1}^q X_{u^\alpha} \theta^\alpha,
\]
where $\theta^\alpha = \mathrm{d}u^\alpha - u^\alpha _x\, \mathrm{d}x$ are the order zero basic contact one-forms, \cite{KO-2003}.  However, since our computations are performed modulo contact forms, these are omitted.
\end{remark}

Dual to $\omega$, we have the \emph{lifted derivative operator}
\[
\mathrm{D}_X = \frac{1}{\mathrm{D}_x(X)}\, \mathrm{D}_x.
\]
The \emph{prolonged action} of $G$ to the $n$\textsuperscript{th} order jet space $\J\n$ is given by
\[
U^\alpha_{X^k} = \mathrm{D}_X^k(U^\alpha),\qquad \alpha = 1,\ldots, q,\qquad 0\leq k\leq n.
\]
At the infinitesimal level, let
\[
\vv_\nu = \xi_\nu(x,u)\pp{}{x} + \sum_{\alpha=1}^q \phi^\alpha_\nu \pp{}{u^\alpha},\qquad \nu=1,\ldots,r,
\]
denote a basis of infinitesimal generators.  The prolongation formula for the infinitesimal generators is
\[
\pr\vv_\nu = \xi_\nu(x,u)\pp{}{x} + \sum_{k\geq 0} \sum_{\alpha=1}^q \phi^{\alpha,k}_\nu \pp{}{u^\alpha_{x^k}},\qquad \nu=1,\ldots,r,
\]
with the component $\phi^{\alpha,k}_\nu$ given by the formula 
\[
\phi^{\alpha,k}_\nu = \mathrm{D}_x^k(Q^\alpha_\nu) + \xi_\nu u^{\alpha}_{x^{k+1}},\qquad\text{where} \qquad Q^\alpha_\nu(x,u^{(1)}) = \phi^\alpha_\nu -  \xi_\nu u_x^\alpha
\]
are the components of the characteristic $Q_\nu=(Q^1_\nu,\ldots,Q^q_\nu)$.

\begin{example}
Consider the action of the special Euclidean group $\text{SE}(2)$ on planar curves $\{(x,u(x))\}$ given by
\begin{equation}\label{SE(2)}
X = x \cos\varphi - u\sin \varphi + a,\qquad
U = x\sin \varphi + u \cos \varphi + b,
\end{equation}
where $a, b, \varphi \in \mathbb{R}$.  Then the horizontal lifted one-form is
\[
\omega =  \mathrm{D}_x(X)\, \mathrm{d}x = (\cos \varphi - u_x \sin \varphi) \, \mathrm{d}x,
\]
and the lifted derivative operator is
\[
\mathrm{D}_X = \frac{1}{\cos \varphi - u_x \sin \varphi} \mathrm{D}_x.
\]
Therefore, the prolonged action is, up to order two, 
\[
U_X = \frac{\sin \varphi + u_x \cos\varphi}{\cos\phi - u_x \sin \varphi},\qquad
U_{XX} = \frac{u_{xx}}{(\cos\varphi - u_x \sin\varphi)^3}.
\]
A basis of infinitesimal generators is given by the vector fields
\begin{equation}\label{eq:elastica-v}
\vv_1 = \pp{}{x},\qquad \vv_2=\pp{}{u},\qquad \vv_3 = -u\pp{}{x} + x\pp{}{u}.
\end{equation}
Up to order two, their prolongation is
\[
\pr^{(2)}\vv_1= \pp{}{x},\qquad \pr^{(2)}\vv_2 = \pp{}{u},\qquad
\pr^{(2)}\vv_3 = -u \pp{}{x} + x \pp{}{u} + (1+u_x^2)\pp{}{u_x} + 3u_x u_{xx} \pp{}{u_{xx}}.
\]
\end{example}
 
 \subsection{Variational Symmetry}
 
We now recall the notion of a variational symmetry group for a Lagrangian and the celebrated Noether's theorem.
 
\begin{definition}
A connected Lie group of transformations $G$ acting on $\mathbb{R}^{q+1}$ is called a \emph{variational symmetry group} of the functional $\mathcal{L}[u] = \int L(x,u^{(n)})\,\mathrm{d}x$ if and only if
\[
\int g\cdot (L(x,u\n)\, \mathrm{d}x) = \int L(X,U\n)\, \omega = \int L(x,u\n)\, \mathrm{d}x\qquad\text{for all}\qquad g\in G,
\]
where the prolonged action is defined. At the infinitesimal level, if $\vv_1,\ldots,\vv_r$ is a basis of infinitesimal generators, then $G$ is a variational symmetry group of the functional if and only if
\[
\text{pr}^{(n)} \vv_\nu(L) + L\, \mathrm{D}_x(\xi_\nu) = 0,\qquad \nu=1,\ldots,r.
\]
\end{definition}
 
 \begin{example}
A classical example of $\text{SE}(2)$-invariant Lagrangian is given by the Euler elastica
\begin{equation}\label{eq: elastica functional}
\mathcal{L} = \int \frac{1}{2} \kappa^2\, \mathrm{d}s,
\end{equation}
where $\kappa = \dfrac{u_{xx}}{(1+u_x^2)^{3/2}}$ is the curvature of a planar curve and $\omega = \mathrm{d}s = \sqrt{1+u_x^2}\, \mathrm{d}x$ is the arc-length element.  In local coordinates, the functional \eqref{eq: elastica functional} is
\begin{equation}\label{eq:elastica-functional}
\mathcal{L}[u]= \int \frac{u_{xx}^2}{2(1+u_x^2)^{5/2}}\, \mathrm{d}x.
\end{equation}
The elastica problem has a long history dating back to Euler, \cite{E-1744}.  For a more modern account we refer to \cite{L-1927}.
\end{example}

\begin{definition}
For $1\leq \alpha \leq q$, the $\alpha$\textsuperscript{th} \emph{Euler operator} is the differential operator
\begin{equation}\label{eq:Euler-operator}
\bE_\alpha = \sum_{k=0}^\infty (-{\rm D}_x)^k \pp{}{u^\alpha_{x^k}}.
\end{equation}
\end{definition}

\begin{proposition}
If $u=u(x)$ is a smooth extremal of the variational problem $\mathcal{L}[u] = \int L(x,u\n) \mathrm{d}x$, then it must be a solution of the Euler--Lagrange equations
\[
\bE(L) = (\bE_1(L),\ldots,\bE_q(L))=0.
\]
\end{proposition}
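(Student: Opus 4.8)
The plan is to use the standard first-variation argument from the calculus of variations. First I would fix a point $u=u(x)$ and consider a one-parameter family of variations $u(x)+\epsilon\,\eta(x)$, where $\eta=(\eta^1,\ldots,\eta^q)$ is an arbitrary smooth perturbation that, together with all of its derivatives up to order $n-1$, vanishes at the endpoints of the integration interval (equivalently, has compact support in the interior). The defining property of an extremal is that the first variation vanishes, i.e.
\[
\frac{\mathrm{d}}{\mathrm{d}\epsilon}\bigg|_{\epsilon=0}\mathcal{L}[u+\epsilon\,\eta]=0
\]
for every such admissible $\eta$. The goal is to rewrite this condition in a form from which the Euler--Lagrange equations can be read off.

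Next I would differentiate under the integral sign and apply the chain rule. Since $(u+\epsilon\,\eta)^{(n)}$ contributes the terms $\eta^\alpha_{x^k}$ in each jet coordinate, this yields
\[
\frac{\mathrm{d}}{\mathrm{d}\epsilon}\bigg|_{\epsilon=0}\mathcal{L}[u+\epsilon\,\eta]
=\int \sum_{\alpha=1}^q \sum_{k=0}^n \pp{L}{u^\alpha_{x^k}}\,\eta^\alpha_{x^k}\,\mathrm{d}x.
\]
The core computation is then to integrate by parts $k$ times in each summand so as to transfer all derivatives off of $\eta^\alpha_{x^k}$ and onto the coefficient $\partial L/\partial u^\alpha_{x^k}$. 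Each integration by parts produces a boundary term together with a factor of $-\mathrm{D}_x$; after $k$ applications the interior contribution carries the operator $(-\mathrm{D}_x)^k$. Collecting the interior terms over $k$ reconstructs precisely the Euler operator $\bE_\alpha$ of \eqref{eq:Euler-operator}, giving
\[
\frac{\mathrm{d}}{\mathrm{d}\epsilon}\bigg|_{\epsilon=0}\mathcal{L}[u+\epsilon\,\eta]
=\int \sum_{\alpha=1}^q \bE_\alpha(L)\,\eta^\alpha\,\mathrm{d}x,
\]
where the boundary contributions have been discarded using the vanishing endpoint conditions on $\eta$ and its derivatives.

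The final step is to conclude $\bE_\alpha(L)=0$ from the fact that the displayed integral vanishes for \emph{all} admissible $\eta$. This is where I would invoke the fundamental lemma of the calculus of variations (the du Bois-Reymond lemma): if a continuous function integrates to zero against every smooth compactly supported test function, then that function is identically zero. Applying this componentwise, with $\eta$ chosen to probe each $\alpha$ in turn, forces $\bE_\alpha(L)=0$ for every $\alpha=1,\ldots,q$, which is the asserted system $\bE(L)=0$.

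The routine but bookkeeping-heavy part is the repeated integration by parts and verifying that the boundary terms genuinely cancel under the stated endpoint hypotheses; the conceptual crux, and the step that actually delivers the pointwise equations rather than merely an integral identity, is the application of the fundamental lemma. I would expect the main care to lie in stating the admissible class of variations precisely enough that both the vanishing of the boundary terms and the hypotheses of the fundamental lemma are simultaneously satisfied.
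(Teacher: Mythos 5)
Your proof is correct: it is the classical first-variation argument (differentiate under the integral, integrate by parts to assemble the Euler operator $\bE_\alpha = \sum_k (-\mathrm{D}_x)^k \partial/\partial u^\alpha_{x^k}$, then invoke the fundamental lemma of the calculus of variations), which is exactly the standard proof this paper implicitly relies on — the paper states the proposition without proof, deferring to the cited reference of Olver. No gaps; your care about the endpoint conditions on $\eta$ and its derivatives up to order $n-1$ is precisely what is needed for the boundary terms to vanish.
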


\begin{example}
The Euler--Lagrange equation of the Euler elastica functional \eqref{eq:elastica-functional} is
\begin{equation}\label{eq: xu elastica}
2 u_{xxxx}(1+u_x^2)^2+5u_{xx}^3(6 u_x^2-1)-20 u_x u_{xx} u_{xxx}(1+u_x^2)=0.
\end{equation}
In terms of the curvature and its arc length derivative, the differential equation \eqref{eq: xu elastica} simplifies to
\begin{equation}\label{eq: free elastica}
\kappa_{ss} + \frac{\kappa^3}{2} = 0.
\end{equation}
\end{example}

\begin{theorem}\label{thm:symEL}
If $G$ is a variational symmetry group of $\mathcal{L}[u]=\int L(x,u\n)\, \mathrm{d}x$, then $G$ is a symmetry group of the Euler--Lagrange equations $\bE(L)=0$.
\end{theorem}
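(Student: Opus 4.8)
The plan is to verify the infinitesimal symmetry criterion for the Euler--Lagrange equations, working with the evolutionary (characteristic) representative of each generator $\vv_\nu$. First I would rewrite the infinitesimal variational symmetry condition $\text{pr}^{(n)}\vv_\nu(L)+L\,\mathrm{D}_x(\xi_\nu)=0$ in characteristic form. Using the operator decomposition $\pr\vv_\nu=\pr\vv_{Q_\nu}+\xi_\nu\,\mathrm{D}_x$, where $\vv_{Q_\nu}=\sum_\alpha Q^\alpha_\nu\,\partial/\partial u^\alpha$ is the evolutionary vector field with characteristic $Q_\nu$, together with the Leibniz identity $\xi_\nu\,\mathrm{D}_x(L)+L\,\mathrm{D}_x(\xi_\nu)=\mathrm{D}_x(\xi_\nu L)$, the symmetry condition becomes $\pr\vv_{Q_\nu}(L)+\mathrm{D}_x(\xi_\nu L)=0$ for $\nu=1,\dots,r$.

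Next I would apply the Euler operator $\bE$ to this identity. The key elementary fact is that the Euler operator annihilates total derivatives, $\bE(\mathrm{D}_x P)=0$ for every differential function $P$ (null Lagrangians), which removes the divergence term and leaves $\bE(\pr\vv_{Q_\nu}(L))=0$. I would then invoke the classical commutation identity between the Euler operator and a prolonged evolutionary field, $\bE(\pr\vv_{Q}(L))=\pr\vv_{Q}(\bE(L))+\mathbf{D}_{Q}^*(\bE(L))$, in which $\mathbf{D}_{Q}^*$ denotes the formal adjoint of the Fréchet derivative (linearization operator) of $Q$. Specializing to $Q=Q_\nu$ and combining with the previous step yields $\pr\vv_{Q_\nu}(\bE(L))=-\mathbf{D}_{Q_\nu}^*(\bE(L))$.

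The conclusion then follows by restricting to the solution variety. The right-hand side $\mathbf{D}_{Q_\nu}^*(\bE(L))$ is a matrix of differential operators in $\mathrm{D}_x$ applied to the components of $\bE(L)$, so it belongs to the differential ideal generated by $\bE(L)$ and hence vanishes on the prolongation of the variety $\{\bE(L)=0\}$. Therefore $\pr\vv_{Q_\nu}(\bE(L))=0$ whenever $\bE(L)=0$, which is exactly the infinitesimal symmetry criterion for $\vv_{Q_\nu}$. To transfer the statement back to $\vv_\nu$ I would use $\pr\vv_\nu(\bE(L))=\pr\vv_{Q_\nu}(\bE(L))+\xi_\nu\,\mathrm{D}_x(\bE(L))$; both summands vanish on the solution variety, the second because $\mathrm{D}_x(\bE(L))$ again lies in the prolonged ideal. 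Thus each $\vv_\nu$ satisfies the infinitesimal symmetry criterion, and since $G$ is connected it is a symmetry group of $\bE(L)=0$.

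The main obstacle is establishing the commutation identity $\bE(\pr\vv_{Q}(L))=\pr\vv_{Q}(\bE(L))+\mathbf{D}_{Q}^*(\bE(L))$, which carries the real content; the remaining steps are bookkeeping. I would prove it by expanding $\pr\vv_Q(L)=\sum_{\alpha,k}\mathrm{D}_x^k(Q^\alpha)\,\partial L/\partial u^\alpha_{x^k}$, applying $\bE$ directly, and repeatedly integrating by parts via the Leibniz rule for the Euler operator: the terms that reassemble into $\pr\vv_Q(\bE(L))$ separate off, and the remainder organizes precisely into $\mathbf{D}_Q^*$ acting on $\bE(L)$. Since this identity is standard, I would alternatively simply cite \cite{O-1993}. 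A secondary point requiring care is the maximal-rank (nondegeneracy) hypothesis on the system $\bE(L)=0$, which guarantees that the infinitesimal criterion is equivalent to genuine invariance of the solution set under $G$; I would assume this as part of the setup.
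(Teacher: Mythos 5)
The paper never actually proves Theorem \ref{thm:symEL}: it is recalled as a classical fact, with the reader referred to \cite{O-1993} at the opening of Section \ref{sec:invariant-L}, so there is no internal proof to compare against. Your argument is correct, and it is the standard infinitesimal proof from that reference: pass to the evolutionary form $\pr\vv_{Q_\nu}(L)+\mathrm{D}_x(\xi_\nu L)=0$, apply $\bE$ (which annihilates total derivatives), and use the commutation identity $\bE(\pr\vv_{Q}(L))=\pr\vv_{Q}(\bE(L))+\mathbf{D}_{Q}^*(\bE(L))$ to conclude that $\pr\vv_{\nu}(\bE(L))$ lies in the differential ideal generated by the components of $\bE(L)$. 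One caution about the final step: what this argument gives directly is the \emph{generalized}-symmetry criterion, namely vanishing of $\pr\vv_\nu(\bE(L))$ on prolonged solutions, where $\bE(L)$ \emph{and all its total derivatives} vanish, whereas the classical point-symmetry criterion requires vanishing on the subvariety $\{\bE(L)=0\}$ of the jet space itself. Bridging the two requires local solvability of the Euler--Lagrange system (every point of the jet-space variety lies on a prolonged solution), while maximal rank is what converts the criterion into genuine invariance of the solution set; so the nondegeneracy hypothesis you flag as ``secondary'' is in fact doing real work and must be assumed, exactly as in \cite{O-1993}. If one wants to avoid nondegeneracy altogether, there is a more elementary finite argument worth knowing: for a point transformation $g$ with $\bar{L}\,\mathrm{d}\bar{x}=L\,\mathrm{d}x$, the first-variation computation shows that $\bE(L)$ equals an invertible (Jacobian) matrix factor times the transformed Euler--Lagrange expressions, so a variational symmetry, for which $\bar{L}=L$, maps solutions directly to solutions with no solvability assumptions on the system.
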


\begin{remark}
Since \eqref{eq: xu elastica} is expressible in terms of the curvature and its arc length derivatives, the differential equation is immediately invariant under the prolonged action of the special Euclidean group $\text{SE}(2)$.

We note that the converse of Theorem \ref{thm:symEL} is incorrect, \cite{O-1993}.  In general, $\bE(L)=0$ can admit a larger symmetry group than that of the functional $\mathcal{L}[u] = \int L(x,u\n)\, \mathrm{d}x$.
\end{remark}
 
\begin{definition}
A \emph{conserved quantity} (or \emph{constant of motion} or \emph{first integral}) for the system of ordinary differential equations $\Delta(x,u\n)=0$ is a  function $C(x,u^{(m)})$ such that
\[
\mathrm{D}_x(C) = 0
\]
on the solution space of $\Delta(x,u\n)=0$.  In other words, $C(x,u^{(m)})$ is constant on solutions of $\Delta(x,u\n)=0$.
\end{definition}

We now state one of the simplest versions of Noether's Theorem, \cite{O-1993}.

\begin{theorem}\label{thm:Noether}
Let $G$ be a one-parameter group of variational symmetries for the functional $\mathcal{L}[u] = \int L(x,u\n)\, \mathrm{d}x$ with infinitesimal generator
\begin{equation}\label{eq:v}
\vv = \xi(x,u)\pp{}{x} + \sum_{\alpha=1}^q \phi_\alpha(x,u)\pp{}{u^\alpha}
\end{equation}
and characteristic components $Q^\alpha(x,u^{(1)}) = \phi_\alpha -  \xi u_x^\alpha$.  Then, there exists a constant of motion $C=-(A+L\xi)$ where $A$ is a certain function depending on $Q=(Q^1,\ldots,Q^q)$, $L$, and their derivatives.
\end{theorem}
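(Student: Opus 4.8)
The plan is to follow the classical divergence-identity route (cf.\ \cite{O-1993}): I would first recast the infinitesimal variational symmetry criterion using the evolutionary (characteristic) form of $\vv$, then integrate by parts to isolate the Euler--Lagrange expression, and finally restrict to the solution space.

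First I would replace the prolonged generator by its evolutionary representative. Writing $\vv_Q = \sum_{\alpha=1}^q Q^\alpha\,\pp{}{u^\alpha}$ for the characteristic vector field, the prolongation formula $\phi^{\alpha,k} = \mathrm{D}_x^k(Q^\alpha) + \xi\,u^\alpha_{x^{k+1}}$ recorded above yields the operator identity $\text{pr}^{(n)}\vv = \text{pr}^{(n)}\vv_Q + \xi\,\mathrm{D}_x$, where $\text{pr}^{(n)}\vv_Q = \sum_{k\geq 0}\sum_{\alpha}\mathrm{D}_x^k(Q^\alpha)\,\pp{}{u^\alpha_{x^k}}$. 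Applying both sides to $L$ and substituting into the variational symmetry condition $\text{pr}^{(n)}\vv(L) + L\,\mathrm{D}_x(\xi) = 0$, the terms $\xi\,\mathrm{D}_x(L) + L\,\mathrm{D}_x(\xi)$ collapse into a single total derivative, leaving
\[
\text{pr}^{(n)}\vv_Q(L) + \mathrm{D}_x(L\,\xi) = 0.
\]

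The technical heart of the argument is the next step: rewriting $\text{pr}^{(n)}\vv_Q(L) = \sum_{k,\alpha}\mathrm{D}_x^k(Q^\alpha)\,\pp{L}{u^\alpha_{x^k}}$ by repeated integration by parts. Moving each total derivative $\mathrm{D}_x^k$ off the factor $Q^\alpha$ and onto the coefficient $\pp{L}{u^\alpha_{x^k}}$ produces the signed operator $(-\mathrm{D}_x)^k$; summing over $k$ reconstructs precisely the Euler operator $\bE_\alpha$ of \eqref{eq:Euler-operator}, while the accumulated boundary contributions telescope into a total derivative $\mathrm{D}_x(A)$ with $A = A(Q,L,\dots)$ a definite function of $Q$, $L$, and their $x$-derivatives. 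This gives $\text{pr}^{(n)}\vv_Q(L) = \sum_\alpha Q^\alpha\,\bE_\alpha(L) + \mathrm{D}_x(A)$.

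Combining the two displays yields the off-shell divergence identity $\sum_\alpha Q^\alpha\,\bE_\alpha(L) + \mathrm{D}_x(A + L\,\xi) = 0$. Restricting to the solution space of the Euler--Lagrange equations $\bE(L)=0$ annihilates the first sum, so $\mathrm{D}_x(A + L\,\xi)=0$ there, and hence $C = -(A + L\,\xi)$ is a constant of motion. I expect the main obstacle to be the integration-by-parts step: producing the closed form of the remainder $A$ for an $n$\textsuperscript{th} order Lagrangian requires careful bookkeeping of the telescoping boundary terms (equivalently, of the higher-order Euler operators), whereas the remaining manipulations are purely algebraic rearrangements.
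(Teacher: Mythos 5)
Your proposal is correct and takes exactly the approach the paper relies on: the paper states Theorem~\ref{thm:Noether} without proof, deferring to \cite{O-1993}, and your argument --- splitting $\pr\vv$ into its evolutionary representative plus $\xi\,\mathrm{D}_x$, integrating by parts to extract $\sum_\alpha Q^\alpha \bE_\alpha(L) + \mathrm{D}_x(A)$, and restricting to the solution space --- is precisely the standard proof from that reference. It also mirrors, step for step, the paper's own proof of the discrete analogue (Theorem~\ref{thm:discrete-Noether}), where the shift operator $\bS$ plays the role of your integration by parts and the discrete Euler operator is isolated before passing on-shell.
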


\begin{example}
For a first order variational problem $\mathcal{L}[u] = \int L(x,u^{(1)})\, \mathrm{d}x$, with infinitesimal variational symmetry generator \eqref{eq:v},
\begin{equation}\label{eq: Noether order 1}
C = -\bigg(\sum_{\alpha=1}^q Q_\alpha \pp{L}{u^\alpha_x} + \xi L\bigg)
\end{equation}
is a conserved quantity of the Euler--Lagrange equations $\bE(L)=0$.
\end{example}

\begin{example}
For a one-dimensional variational problem of order two, $\mathcal{L}[u] = \int L(x,u^{(2)})\, \mathrm{d}x$, with infinitesimal variational symmetry generator \eqref{eq:v}, 
\begin{equation}\label{eq:order2-C}
C = -\bigg(Q\bigg(\pp{L}{u_x} - D_x\bigg(\pp{L}{u_{xx}}\bigg)\bigg) + D_x(Q)\pp{L}{u_{xx}}+\xi L\bigg),
\end{equation}
is a conserved quantity of the Euler--Lagrange equations $\bE(L)=0$.
\end{example}

\begin{example}
For the Euler elastica functional \eqref{eq:elastica-functional}, the conserved quantities that come from  \eqref{eq:order2-C} are
\begin{gather*}
C_1 = \frac{2u_x u_{xxx}}{(1+u_x^2)^{5/2}}-\frac{(1+6u_x^2)u_{xx}^2}{(1+u_x^2)^{7/2}},\qquad
C_2 = \frac{5 u_x u_{xx}^2}{(1+u_x^2)^{7/2}} - \frac{2u_{xxx}}{(1+u_x^2)^{5/2}},\\
C_3 =  (x+uu_x)\bigg[\frac{2u_{xxx}}{(1+u_x^2)^{5/2}}-\frac{5u_xu_{xx}^2}{(1+u_x^2)^{7/2}}\bigg]-\frac{uu_{xx}^2}{(1+u_x^2)^{5/2}}-\frac{2u_{xx}}{(1+u_x^2)^{3/2}}.
\end{gather*}
\end{example}

\subsection{Divergence Symmetry}\label{ref:div-sym}

The notion of variational symmetry was extended by Bessel-Hagen, \cite{B-1921}, to allow  divergence symmetries of a variational functional, \cite{O-1993}.

\begin{definition}\label{def:div-sym-group}
A connected Lie group of transformations $G$ acting on $\mathbb{R}^{q+1}$ is called a \emph{divergence symmetry group} of the functional $\mathcal{L}[u] = \int L(x,u\n)\, \mathrm{d}x$ if and only if 
\begin{equation}\label{eq:divergence-invariance}
\int g\cdot (L(x,u\n)\, \mathrm{d}x) = \int [L(x,u\n)+\mathrm{D}_x(P_g(x,u\n))]\, \mathrm{d}x,
\end{equation}
for some differential function $P_g(x,u\n)$ depending on the group parameter $g\in G$.  At the infinitesimal level, if $\vv_1,\ldots, \vv_r$ is a basis of infinitesimal generators of $G$, then $G$ is a divergence symmetry group if and only if
\[
\pr\n\vv_\nu(L) + L\, \mathrm{D}_x(\xi_\nu) = \mathrm{D}_x(B_\nu),\qquad \nu=1,\ldots,r,
\]
where $B_\nu(x,u\n)$ are certain differential functions.
\end{definition}

Since the kernel of the Euler--Lagrange operators \eqref{eq:Euler-operator} are total derivatives of differential functions, i.e.\ $\mathrm{D}_x(B(x,u\n))=0$, it follows that divergence symmetries produce symmetries of the corresponding Euler--Lagrange equations $\bE(L)=0$. Noether's Theorem \ref{thm:Noether} still holds for divergence symmetries.  Constants of motions are now given by $C=B-A-L\xi$.  

We now show that any divergence symmetry group $G$ of a variational problem $\mathcal{L}[u]=\int L(x,u\n)\, \mathrm{d}x$ can be made into the variational symmetry group of a modified Lagrangian with identical Euler--Lagrange equations.  This observation will play an important role in Section~\ref{sec-sym-preserving-schemes}.  

\begin{theorem}
Let $\mathcal{L}[u] = \int L(x,u\n)\, \mathrm{d}x$ be a functional with divergence symmetry group $G$ satisfying \eqref{eq:divergence-invariance}.   Then $G$ is a variational symmetry group of the modified functional
\begin{equation}\label{eq:modified-lagrangian}
\overline{\mathcal{L}}[u] = \int \oL\, \mathrm{d}x = \int (L+\zeta_x)\, \mathrm{d}x,
\end{equation}
where $G$ acts on the new variable $\zeta$ according to
\begin{equation}\label{eq:zeta action}
g\cdot \zeta = \zeta - P_g.
\end{equation}
\end{theorem}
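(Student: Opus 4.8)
The plan is to verify directly the global invariance condition of the variational-symmetry definition for the modified Lagrangian $\oL = L + \zeta_x$, where $G$ acts on the extended coordinates $(x,u,\zeta)$ by its original prolonged action on $(x,u)$ together with $g\cdot\zeta = \zeta - P_g$. Since being a variational symmetry group means $\int g\cdot(\oL\,\mathrm{d}x) = \int \oL\,\mathrm{d}x$ for every $g\in G$, the whole point is to show that the divergence term $\mathrm{D}_x(P_g)$ produced by the non-invariance of $L$ is cancelled exactly by the contribution coming from the transformation rule of the auxiliary variable $\zeta$.

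First I would decompose the transformed Lagrangian form. Writing $Z = g\cdot\zeta = \zeta - P_g$ for the transformed auxiliary variable and $\omega = \mathrm{D}_x(X)\,\mathrm{d}x$ for the lifted horizontal form, the product action yields
\[
g\cdot(\oL\,\mathrm{d}x) = \big[\,L(X,U\n) + Z_X\,\big]\,\omega = L(X,U\n)\,\omega + Z_X\,\omega,
\]
where $Z_X = \mathrm{D}_X(Z)$ is computed with the lifted derivative operator $\mathrm{D}_X = \mathrm{D}_x/\mathrm{D}_x(X)$.

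The key step, and the one I expect to demand the most care, is to show that the auxiliary contribution $Z_X\,\omega$ collapses back to an ordinary total derivative in the base variable. Because $P_g$ does not depend on $\zeta$, applying $\mathrm{D}_X = \mathrm{D}_x/\mathrm{D}_x(X)$ makes the Jacobian factor hidden in $\omega$ cancel exactly:
\[
Z_X\,\omega = \frac{\mathrm{D}_x(Z)}{\mathrm{D}_x(X)}\,\mathrm{D}_x(X)\,\mathrm{d}x = \mathrm{D}_x(Z)\,\mathrm{d}x = \big[\,\zeta_x - \mathrm{D}_x(P_g)\,\big]\,\mathrm{d}x.
\]
This is the heart of the argument: a total-derivative term stays a total-derivative term under the lifted action, and the group enters only through the single term $-\mathrm{D}_x(P_g)$.

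It then remains to assemble the pieces and invoke the divergence-symmetry hypothesis \eqref{eq:divergence-invariance}, namely $\int L(X,U\n)\,\omega = \int g\cdot(L\,\mathrm{d}x) = \int[L + \mathrm{D}_x(P_g)]\,\mathrm{d}x$. Combining this with the previous display gives
\[
\int g\cdot(\oL\,\mathrm{d}x) = \int\big[L + \mathrm{D}_x(P_g)\big]\,\mathrm{d}x + \int\big[\zeta_x - \mathrm{D}_x(P_g)\big]\,\mathrm{d}x = \int\big[L + \zeta_x\big]\,\mathrm{d}x = \int \oL\,\mathrm{d}x,
\]
the two copies of $\mathrm{D}_x(P_g)$ cancelling, which is exactly the variational-symmetry condition for $\oL$. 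As an independent check I would verify the infinitesimal statement: differentiating $g\cdot\zeta = \zeta - P_g$ at the identity identifies the $\zeta$-component of the extended generator as $-B_\nu$, so that $\widetilde{\vv}_\nu = \vv_\nu - B_\nu\,\partial_\zeta$; a short prolongation computation then gives $\pr\widetilde{\vv}_\nu(\zeta_x) = -\mathrm{D}_x(B_\nu) - \zeta_x\,\mathrm{D}_x(\xi_\nu)$, whence $\text{pr}^{(n)}\widetilde{\vv}_\nu(\oL) + \oL\,\mathrm{D}_x(\xi_\nu) = \big[\text{pr}^{(n)}\vv_\nu(L) + L\,\mathrm{D}_x(\xi_\nu)\big] - \mathrm{D}_x(B_\nu)$, which vanishes by the infinitesimal divergence-symmetry condition of Definition \ref{def:div-sym-group}.
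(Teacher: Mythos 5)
Your main computation is correct, and it is essentially the same calculation as the second half of the paper's proof: split $g\cdot(\oL\,\mathrm{d}x)$ into the $L$-part and the $\zeta_x$-part, note that the Jacobian factor $\mathrm{D}_x(X)$ in $\omega$ cancels against the one in $\mathrm{D}_X$, and let the two copies of $\mathrm{D}_x(P_g)$ cancel after invoking \eqref{eq:divergence-invariance}. Your infinitesimal cross-check (with extended generators $\vv_\nu - B_\nu\,\partial_\zeta$) is also a valid computation.

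There is, however, a genuine gap: you nowhere verify that the prescription \eqref{eq:zeta action}, together with the original action on $(x,u)$, actually defines a \emph{left group action} on the extended space. This is not automatic, because the functions $P_g$ are produced separately for each $g$, and the composition law $(hg)\cdot\zeta = h\cdot(g\cdot\zeta)$ requires the cocycle identity $P_{hg} = P_h + h\cdot P_g$, which must itself be deduced from the divergence-symmetry hypothesis. The paper devotes the entire first half of its proof to this point, showing
\begin{equation*}
\mathrm{D}_x(P_{hg})\,\mathrm{d}x \;=\; (hg)\cdot(L\,\mathrm{d}x) - L\,\mathrm{d}x \;=\; \mathrm{D}_x\big(P_h + h\cdot P_g\big)\,\mathrm{d}x,
\end{equation*}
so that the cocycle identity holds up to an additive constant, which is exactly enough for the induced action on $\mathrm{D}_x(\zeta)=\zeta_x$ --- the only quantity entering $\oL$ --- to be well defined. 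Without this step, what you have proved is that each individual transformation indexed by $g$ preserves the functional; but the conclusion that $G$ is a variational symmetry \emph{group} of $\overline{\mathcal{L}}$ presupposes that these transformations assemble into an action of $G$, as the definition of a variational symmetry group demands. The same issue silently affects your infinitesimal check: for $\vv_\nu - B_\nu\,\partial_\zeta$ to generate an action of $G$, these vector fields must close under the Lie bracket with the structure constants of the Lie algebra of $G$, which is the infinitesimal shadow of the cocycle condition. Supplying the verification of the composition law (or, equivalently, the well-definedness of the action on $\zeta_x$) would complete your argument and align it fully with the paper's proof.
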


\begin{proof}
We first show that \eqref{eq:zeta action} induces a well-defined left group action on $\mathrm{D}_x(\zeta)=\zeta_x$.  To this end, let $h,g\in G$. We first note that
\begin{align*}
\mathrm{D}_x(P_{hg})\, \mathrm{d}x &= (hg)\cdot (L\, \mathrm{d}x) - L\,\mathrm{d}x \\
&= h\cdot (L+\mathrm{D}_x(P_g))\, \mathrm{d}x-L\, \mathrm{d}x \\
&= (L+\mathrm{D}_x(P_h))\, \mathrm{d}x +h\cdot [\mathrm{D}_x(P_g)\, \mathrm{d}x] - L\, \mathrm{d}x  \\
&= \mathrm{D}_x(P_h)\,\mathrm{d}x + \mathrm{D}_{X} (h\cdot P_g)\, \omega \\
&= \mathrm{D}_x(P_h)\,\mathrm{d}x + \mathrm{D}_x( h\cdot P_g)\, \mathrm{d}x \\
&= \mathrm{D}_x(P_h+h \cdot P_g)\, \mathrm{d}x.
\end{align*}
Thus
\begin{align*}
(hg)\cdot [\mathrm{D}_x(\zeta)\, \mathrm{d}x] &= \mathrm{D}_X [(hg)\cdot \zeta]\, \omega \\
&= \mathrm{D}_x[(hg)\cdot \zeta]\, \mathrm{d}x \\
&= \mathrm{D}_x(\zeta - P_{hg})\, \mathrm{d}x \\
&= \mathrm{D}_x(\zeta - P_h - h\cdot P_g)\, \mathrm{d}x \\
&= \mathrm{D}_x(h\cdot (\zeta-P_g))\, \mathrm{d}x \\
&= h\cdot(\mathrm{D}_x(g\cdot \zeta)\, \mathrm{d}x) \\
&= h\cdot (g\cdot (\mathrm{D}_x(\zeta)\, \mathrm{d}x)),
\end{align*}
which shows that we have a well-defined left group action on $\mathrm{D}_x(\zeta)=\zeta_x$.  

It is now straightforward to show that $G$ is a variational symmetry group of the modified Lagrangian functional \eqref{eq:modified-lagrangian}.  For $g\in G$,
\begin{align*}
\int g\cdot (\oL\, \mathrm{d}x) &= \int g\cdot (L\, \mathrm{d}x) + \int g\cdot (\zeta_x\, \mathrm{d}x) \\
&= \int (L+\mathrm{D}_x(P_g))\, \mathrm{d}x+ \int \mathrm{D}_X(g\cdot \zeta)\, \omega \\
&= \int (L+\mathrm{D}_x(P_g))\, \mathrm{d}x+ \int (\mathrm{D}_x(X))^{-1} (\zeta_x-\mathrm{D}_x(P_g)) \, (\mathrm{D}_xX)\, \mathrm{d}x \\
&= \int (L+\mathrm{D}_x(P_g))\, \mathrm{d}x+ \int (\zeta_x-\mathrm{D}_x(P_g))\, \mathrm{d}x \\
&= \int (L+\zeta_x)\, \mathrm{d}x \\
&= \int \oL\, \mathrm{d}x.
\end{align*}
\end{proof}

\begin{remark}
By construction, we note that $\mathcal{L}=\int L\, \mathrm{d}x$ and $\overline{\mathcal{L}} = \int \overline{L}\,\mathrm{d}x$ have the same conserved quantities.
\end{remark}

\begin{example}\label{ex:divergence invariant lagrangian}
A simple example of Lagrangian admitting a divergence symmetry group is given by
\[
\mathcal{L} = \int L\, \mathrm{d}x = \int \bigg(u_x^2 - \frac{1}{u^2}\bigg)\, \mathrm{d}x,
\]
with Euler--Lagrange equation
\begin{equation}\label{eq:divergent EL}
u_{xx} = \frac{1}{u^3}.
\end{equation}
The corresponding divergence symmetry group action is
\[
X = \frac{\alpha x + \beta}{\delta x + \gamma},\qquad U= \frac{u}{\delta x + \gamma},\qquad \text{where}\qquad \alpha\gamma - \beta \delta = 1.
\]
The associated infinitesimal generators are
\begin{equation}\label{eq:v-ex2}
\vv_1 = \pp{}{x},\qquad \vv_2 = 2x\pp{}{x} + u\pp{}{u},\qquad \vv_3 = x^2\pp{}{x}+xu\pp{}{u}.
\end{equation}
We note that the first two vector field generate variational symmetries since
\[
\pr\vv_1(L) + L \mathrm{D}_x(\xi_1) = 0,\qquad \pr\vv_2(L) + L \mathrm{D}_x(\xi_2) = 0.
\]
On the other hand,
\[
\pr\vv_3(L) + L \mathrm{D}_x(\xi_3) = 2uu_x = \mathrm{D}_x(u^2),
\]
which induces a divergence symmetry.  Using \eqref{eq: Noether order 1}, the corresponding conserved quantities are
\[
C_1 = u_x^2+\frac{1}{u^2}, \qquad
C_2 = 2\frac{x}{u^2} - 2(u-x u_x)u_x,\qquad
C_3 = \frac{x^2}{u^2} + (u-xu_x)^2.
\]
These constants of motion are not independent and satisfy the equation
\begin{equation}\label{eq:C-relation}
\frac{C_2^2}{4} - C_1C_3+1=0.
\end{equation}

Since
\begin{align*}
\int g\cdot (L\, \mathrm{d}x) &= \int \bigg[((\delta x+\gamma)u_x - \delta u)^2 - \frac{(\delta x+ \gamma)^2}{u^2}\bigg]\, \frac{\mathrm{d}x}{(\delta x+\gamma )^2} \\
&= \int \bigg[ u_x - \frac{2\delta uu_x}{\delta x+\gamma} + \frac{\delta^2 u^2}{(\delta x+\gamma)^2} - \frac{1}{u^2}\bigg]\, \mathrm{d}x \\
&= \int \bigg[ L + \mathrm{D}_x\bigg(-\frac{\delta u^2}{\delta x+\gamma}\bigg)\bigg]\, \mathrm{d}x,
\end{align*}
an invariant Lagrangian can be defined by introducing a new variable $\zeta$ such that
\[
g\cdot \zeta = \zeta + \frac{\delta u^2}{\delta x + \beta}.
\]
The induced prolonged action is 
\[
g\cdot \zeta_x= \mathrm{D}_X \bigg( \zeta + \frac{\delta u^2}{\delta x + \beta}\bigg) = (\delta x+\gamma)^2 \mathrm{D}_x\bigg( \zeta + \frac{\delta u^2}{\delta x + \beta}\bigg) = (\delta x+\gamma)^2 \bigg[\zeta_x +\mathrm{D}_x\bigg(\frac{\delta u^2}{\delta x + \beta}\bigg)\bigg],
\]
and the modified functional 
\begin{equation}\label{eq:modified lagrangian}
\int \oL\, \mathrm{d}x= \int (L+\zeta_x)\, \mathrm{d}x = \int\bigg(u_x^2-\frac{1}{u^2}+\zeta_x\bigg)\, \mathrm{d}x
\end{equation}
is, by construction, invariant.
\end{example}

\section{Discrete Lagrangians}\label{sec:discrete-L}

We now adapt the results of the previous section to the discrete setting.  Let $z=(z^0,\ldots,z^{q}) = (x,u)$ be coordinates on $\mathbb{R}^{q+1}$.  In this section we are concerned with discrete $\mathbb{R}^{q+1}$-valued functions
\begin{equation}\label{discrete function}
f \colon \mathbb{Z} \to \mathbb{R}^{q+1},\qquad  k \mapsto f(k)=(f^0(k),\ldots,f^q(k)).
\end{equation}
As it is customarily done, we use the index notation
\[
f_k = f(k)
\]
to denote the value of $f$ at $k \in \mathbb{Z}$.  Introducing the \emph{lattice variety}
\[
\pi\colon \mathbb{Z} \times \mathbb{R}^{q+1} \to \mathbb{Z},
\]
the discrete map \eqref{discrete function} defines a one-dimensional {\it discrete submanifold}
\[
\{(k,f_k)\,|\, k\in \mathbb{Z}\} \subset \mathbb{Z} \times \mathbb{R}^{q+1}.
\]

The lattice space $\mathbb{Z}$ does not admit a differentiable structure.  Only the fibers $\pi^{-1}(k) = \mathbb{R}^{q+1}$ are smooth manifolds.  In the following, we use $z_k=(z_k^0,\ldots,z_k^q)$ as coordinates on $\pi^{-1}(k) = \mathbb{R}^{q+1}$.  Natural operators on $\mathbb{Z}$ are the \emph{forward shift}
\begin{equation}\label{eq: forward shift}
\bS = \bS^+ \colon k \mapsto k + 1,
\end{equation}
and the \emph{backward shift}
\[
\bS^{-}\colon k \mapsto k - 1.
\] 
The action of the shift maps $\bS^\pm$ on the fiber coordinates $z_k$ is
\[
\bS^\pm[z_k] = z_{k \pm 1}.
\]
Using the forward shift \eqref{eq: forward shift} we define the \emph{forward difference operator}
\[
\Delta = \bS - \mathds{1},
\]
where $\mathds{1}\colon \mathbb{Z} \to \mathbb{Z}$ is the identity transformation.   

\begin{definition}
Let $n_1 \leq n_2$ be two integers.  The order $n=n_2-n_1$ \emph{discrete jet space} is the lattice variety
\[
\J\sn_{n_1,n_2}=\mathbb{Z} \times (\mathbb{R}^{q+1})^{\times (n+1)},
\]
with coordinates
\[
z_k\sn = (k,\,\ldots\,z_{k+\ell}\, \ldots\,)\in \mathbb{Z}\times (\mathbb{R}^{q+1})^{\times (n+1)},
\]
where $n_1\leq \ell \leq n_2$.  When $n_1=0$ and $n_2=n\in \mathbb{N}_0$, we obtain what we call the $n$\textsuperscript{th} order \emph{forward discrete jet space} $\J\sn = \J\sn_{0,n}$ and drop the subscript notation.
\end{definition}

\begin{example}
For example, coordinates for $\J^{[2]} = \J^{[2]}_{0,2}$ are given by $z^{[2]}_k = (k,z_k,z_{k+1},z_{k+2})$, while coordinates for $\J^{[4]}_{-2,2}$ are provided by $z^{[4]}_k = (k,z_{k-2},z_{k-1},z_k,z_{k+1},z_{k+2})$.
\end{example}

\begin{definition}
Let $L\colon \J\sn \to \mathbb{R}$ be a discrete function.  A \emph{discrete functional} is a formal sum
\[
\mathcal{L}^\td[z] = \sum_{k\in \mathbb{Z}} L(z_k\sn) = \sum_k L_k.
\]
In the following we use the short-hand notation $L_k$ to denote $L(z_k\sn)$ and omit the range of summation over the integer $k \in \mathbb{Z}$.
\end{definition}

\begin{definition}
Let $\mathcal{F}(\J\sn)$ denote the space of real-valued discrete functions $F\colon \J\sn \to \mathbb{R}$.  For $0\leq \alpha \leq q$, the $\alpha$\textsuperscript{th} \emph{discrete Euler operator} is the differential-difference operator $\bE^\td_\alpha\colon \mathcal{F}(\J\sn) \to \mathcal{F}(\J^{[2n]}_{-n,n})$ given by
\[
\bE_\alpha^\td = \sum_{0\leq \ell \leq n} \bS^{-\ell}\pp{}{z^\alpha_{k+\ell}}.
\]
\end{definition}

\begin{theorem}
If $z_k$ is an extremal of the discrete functional $\mathcal{L}^\td[z] = \sum_{k} L_k $, then it must be a solution of the discrete Euler--Lagrange equations
\[
E_\alpha^\td(L_k) = 0,\qquad \alpha=0,\ldots,q.
\]
\end{theorem}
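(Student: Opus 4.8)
The plan is to derive the discrete Euler--Lagrange equations by computing the variation of the discrete functional $\mathcal{L}^\td[z] = \sum_k L_k$ and requiring that it vanish for all admissible variations, in direct analogy with the continuous derivation. First I would introduce a one-parameter family of discrete curves $z_k(\epsilon)$ with $z_k(0)=z_k$ the extremal and with compactly supported variation $\eta_k = \frac{\td}{\td\epsilon}\big|_{\epsilon=0} z_k$, so that only finitely many terms in the formal sum are perturbed and all boundary contributions vanish. Differentiating under the (finite) sum gives
\[
\frac{\td}{\td\epsilon}\bigg|_{\epsilon=0}\mathcal{L}^\td[z(\epsilon)] = \sum_k \sum_{\alpha=0}^q \sum_{0\leq \ell\leq n} \pp{L}{z^\alpha_{k+\ell}}\, \eta^\alpha_{k+\ell}.
\]

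The key step is an index shift, the discrete analogue of integration by parts. In the double sum over $k$ and $\ell$, I would reindex each inner term by replacing $k$ with $k-\ell$ so that the variation $\eta^\alpha$ carries a common index. Concretely, $\sum_k \pp{L}{z^\alpha_{k+\ell}}\,\eta^\alpha_{k+\ell} = \sum_k \bS^{-\ell}\!\left(\pp{L}{z^\alpha_{k+\ell}}\right)\eta^\alpha_k$, where the backward shift $\bS^{-\ell}$ acts as in the definition of $\bE_\alpha^\td$. Because $\eta_k$ has finite support, this reindexing introduces no boundary terms, which is exactly where the compact-support assumption does its work. Collecting the sum over $\ell$ then identifies the coefficient of $\eta^\alpha_k$ as precisely the discrete Euler operator $\bE_\alpha^\td(L_k) = \sum_{0\leq\ell\leq n}\bS^{-\ell}\,\pp{L}{z^\alpha_{k+\ell}}$ applied to $L_k$.

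After the reindexing, the variation takes the form
\[
\frac{\td}{\td\epsilon}\bigg|_{\epsilon=0}\mathcal{L}^\td[z(\epsilon)] = \sum_k \sum_{\alpha=0}^q \bE_\alpha^\td(L_k)\, \eta^\alpha_k.
\]
Since $z_k$ is an extremal this must vanish for every compactly supported variation $\eta_k$, and because each $\eta^\alpha_k$ can be chosen freely and independently (for instance a bump supported at a single lattice site and a single component $\alpha$), the discrete fundamental lemma of the calculus of variations forces each coefficient to vanish, giving $\bE_\alpha^\td(L_k)=0$ for all $\alpha$.

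I expect the main obstacle to be the careful bookkeeping of the index shift and the justification that no boundary terms survive. In the continuous setting integration by parts produces boundary terms that are discarded using compact support or fixed endpoints; here the analogue is that the shift operator $\bS^{-\ell}$ moves summands to indices outside the support of $\eta$, and one must verify that rearranging the (formally infinite but effectively finite) double sum is legitimate. A secondary point worth stating cleanly is the discrete fundamental lemma: localizing the variation to a single site and component is what lets us pass from the vanishing of the summed expression to the vanishing of each $\bE_\alpha^\td(L_k)$ individually. Both points are routine once the finite-support reduction is in place, so the proof is essentially a discrete transcription of the classical argument.
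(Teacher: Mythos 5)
Your proof is correct; note, however, that the paper offers no proof of this theorem at all --- it is stated as a standard fact of discrete variational calculus immediately after the definition of the discrete Euler operators, so there is nothing in the paper to compare against line by line. Your argument --- differentiate under the (effectively finite) sum, reindex via $\sum_k \pp{L_k}{z^\alpha_{k+\ell}}\,\eta^\alpha_{k+\ell} = \sum_k \bS^{-\ell}\bigl(\pp{L_k}{z^\alpha_{k+\ell}}\bigr)\eta^\alpha_k$ justified by the compact support of $\eta$, then invoke the discrete fundamental lemma by localizing $\eta$ to a single site and component --- is the standard derivation and is sound, including the care taken that ``extremal'' for a formal sum is interpreted through compactly supported variations, under which only finitely many summands change. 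It is also consistent with the paper's own technique elsewhere: the identical summation-by-parts manipulation, in the form $Q^\alpha_{k+1}\pp{L_k}{z^\alpha_{k+1}} = \bS\bigl(Q^\alpha_k\pp{L_{k-1}}{z^\alpha_k}\bigr)$, is the key step in the paper's proof of the discrete Noether theorem (Theorem \ref{thm:discrete-Noether}), where it isolates the term $\sum_\alpha Q^\alpha_k\bE^\td_\alpha(L_k)$ plus a total difference --- precisely the discrete integration by parts your proof carries out.
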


Now let $G$ be a Lie group acting on $z_k$.  The prolonged action to $z_k\sn$ is given by the product action
\[
g\cdot z_k\sn = (k,\ldots\, g\cdot z_{k+\ell}\,\ldots).
\]
We note that the Lie group $G$ does not act in the discrete variable $k \in \mathbb{Z}$.  Thus, the action is well-defined on each fiber $\pi^{-1}_n(k) = \J\sn_{n_1,n_2}\big|_k$.

\begin{definition}
A Lie group of transformations $G$ is said to be a \emph{variational symmetry group} of the discrete functional $\displaystyle \mathcal{L}^\td[z] = \sum_{k} L(z_k\sn)$ if and only if
\[
g\cdot L(z_k\sn) = L(g\cdot z_k\sn) = L(z_k\sn).
\]
At the infinitesimal level, let
\[
\vv_\nu = \sum_{\alpha,k} Q_{\nu,k}^\alpha \pp{}{z^\alpha_k} = \sum_{\alpha,k} Q^\alpha(k,z_k) \pp{}{z^\alpha_k},\qquad \nu=1,\ldots,r,
\]
be a basis for the Lie algebra of infinitesimal generators of the group action.  Then $G$ is a variational symmetry group of $\displaystyle \mathcal{L}^\td[z]=\sum_k L_k$ if and only if
\[
\pr \vv(L_k) = \sum_{\alpha,\ell} Q_{\nu,k+\ell}^\alpha  \pp{L_k}{z^\alpha_{k+\ell}} = 0.
\]
\end{definition}

As in the continuous setting, Noether's Theorem still holds in the discrete setting, and each infinitesimal generator yields a conserved quantity.

\begin{definition}
Let $F_k=F\big(z\sn_k\big)=0$ be a system of finite difference equations. A \emph{conserved quantity} is a difference function $C_k=C(z_k\sm)$ such that
\[
\Delta (C_k)=0\qquad \text{on all solutions of}\qquad F_k=0.
\]
\end{definition}

\begin{theorem}\label{thm:discrete-Noether}
Let $\mathcal{L}^\td[z] = \sum_{k} L(z_k^{[1]})$ be a first order discrete Lagrangian with variational symmetry generator
\begin{equation}\label{eq:discrete-v}
\vv = \sum_{\alpha=0}^q Q^\alpha_k \pp{}{z^\alpha_k}.
\end{equation}
Then 
\[
C_k = \sum_{\alpha=0}^q Q_k^\alpha \pp{L_{k-1}}{z^\alpha_k}
\]
is a conserved quantity.
\end{theorem}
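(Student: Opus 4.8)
The plan is to verify directly that the forward difference $\Delta(C_k) = C_{k+1} - C_k$ vanishes on solutions of the discrete Euler--Lagrange equations, mirroring the classical continuous Noether argument. The two ingredients are the \emph{variational symmetry condition}, which holds identically (off the solution space), and the \emph{Euler--Lagrange equations}, which are imposed only on solutions. The whole computation is algebraic, so the work is in specializing the definitions to the first-order case and tracking index shifts carefully.

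First I would specialize to $n=1$. Since $L_k = L(z_k^{[1]})$ depends only on $z_k$ and $z_{k+1}$, the infinitesimal variational symmetry condition $\pr\vv(L_k)=0$ reads
\[
\sum_{\alpha=0}^q\left( Q^\alpha_k \pp{L_k}{z^\alpha_k} + Q^\alpha_{k+1}\pp{L_k}{z^\alpha_{k+1}}\right) = 0,
\]
while the first-order discrete Euler operator $\bE^\td_\alpha = \pp{}{z^\alpha_k} + \bS^{-1}\pp{}{z^\alpha_{k+1}}$ yields the Euler--Lagrange equations
\[
\bE^\td_\alpha(L_k) = \pp{L_k}{z^\alpha_k} + \pp{L_{k-1}}{z^\alpha_k} = 0,\qquad \alpha=0,\ldots,q,
\]
where the backward shift identifies $\bS^{-1}\big(\partial L_k/\partial z^\alpha_{k+1}\big)$ with $\partial L_{k-1}/\partial z^\alpha_k$.

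Next I would compute $\Delta(C_k)$ explicitly. Forward-shifting the definition of $C_k$ gives $C_{k+1} = \sum_\alpha Q^\alpha_{k+1}\,\partial L_k/\partial z^\alpha_{k+1}$, so
\[
\Delta(C_k) = \sum_\alpha Q^\alpha_{k+1}\pp{L_k}{z^\alpha_{k+1}} - \sum_\alpha Q^\alpha_k \pp{L_{k-1}}{z^\alpha_k}.
\]
The symmetry identity lets me replace the first sum by $-\sum_\alpha Q^\alpha_k\,\partial L_k/\partial z^\alpha_k$, after which the two terms combine into
\[
\Delta(C_k) = -\sum_\alpha Q^\alpha_k\left(\pp{L_k}{z^\alpha_k} + \pp{L_{k-1}}{z^\alpha_k}\right) = -\sum_\alpha Q^\alpha_k\,\bE^\td_\alpha(L_k),
\]
which vanishes precisely on the solution space of $\bE^\td_\alpha(L_k)=0$, establishing the claim.

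I do not expect any genuine obstacle: the only point requiring care is the index bookkeeping, namely ensuring that the forward shift in $C_{k+1}$ lands on exactly the term $Q^\alpha_{k+1}\,\partial L_k/\partial z^\alpha_{k+1}$ appearing in the symmetry identity, and that the backward shift in the Euler operator reproduces the on-shell relation. The conceptual heart, exactly as in the smooth case, is that the symmetry condition is an off-shell identity whereas the Euler--Lagrange equations are an on-shell constraint, so the total difference $\Delta(C_k)$ collapses to a linear combination $-\sum_\alpha Q^\alpha_k\,\bE^\td_\alpha(L_k)$ of the equations of motion.
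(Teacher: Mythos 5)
Your proof is correct and is essentially the same argument as the paper's: both rely on the off-shell symmetry identity, the shift relation $\bS\bigl(\pp{L_{k-1}}{z^\alpha_k}\bigr) = \pp{L_k}{z^\alpha_{k+1}}$, and the resulting decomposition $0 = \pr\vv(L_k) = \sum_{\alpha} Q_k^\alpha\, \bE^\td_\alpha(L_k) + \Delta(C_k)$, differing only in that you start from $\Delta(C_k)$ and substitute the symmetry condition, whereas the paper starts from $\pr\vv(L_k)=0$ and extracts $\Delta(C_k)$.
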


\begin{proof}
Since $\vv$ is a variational symmetry of $\mathcal{L}^\td[z]$,
\begin{align*}
0 &= \pr \vv(L_k) = \sum_{\alpha=0}^q Q^\alpha_k \pp{L_k}{z^\alpha_k} + Q^\alpha_{k+1} \pp{L_k}{z^\alpha_{k+1}} \\
&= \sum_{\alpha=0}^q \bigg[Q^\alpha_k \pp{L_k}{z^\alpha_k} + \bS \bigg(Q^\alpha_k \pp{L_{k-1}}{z^\alpha_k}\bigg)\bigg] \\
&= \sum_{\alpha=0}^q \bigg[Q^\alpha_k \pp{L_k}{z^\alpha_k} + \Delta\bigg(Q^\alpha_k \pp{L_{k-1}}{z^\alpha_k}\bigg) + Q_k^\alpha \pp{L_{k-1}}{z^\alpha_k}\bigg] \\
&= \sum_{\alpha=0}^q Q_k^\alpha \bE^\td_\alpha(L_k) + \Delta \bigg(\sum_{\alpha=0}^q Q^\alpha_k \pp{L_{k-1}}{z^\alpha_k}\bigg).
\end{align*}
Since $\bE^\td_\alpha(L_k)=0$, $\alpha = 0,\ldots,q$, the result follows.
\end{proof}

\begin{theorem}\label{thm:order2-discrete-noether}
Let $\mathcal{L}^\td[z] = \sum_{k} L(z_k^{[2]})$ be second order discrete Lagrangian  with variational symmetry generator \eqref{eq:discrete-v}. Then 
\[
C^\td = \sum_{\alpha=0}^q \bigg[Q_k^\alpha \pp{L_{k-1}}{z^\alpha_k} + Q_k^\alpha\pp{L_{k-2}}{z^\alpha_k} + Q^\alpha_{k+1} \pp{L_{k-1}}{z^\alpha_{k+1}}\bigg]
\]
is a conserved quantity.
\end{theorem}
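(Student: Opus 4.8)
The plan is to follow the same strategy as the proof of Theorem~\ref{thm:discrete-Noether} for the first order case, carefully tracking the extra terms that arise from the second order dependence. Since $L_k = L(z_k^{[2]})$ depends on $z_k$, $z_{k+1}$, and $z_{k+2}$, the prolonged variational symmetry condition reads
\[
0 = \pr\vv(L_k) = \sum_{\alpha=0}^q \bigg[ Q^\alpha_k \pp{L_k}{z^\alpha_k} + Q^\alpha_{k+1} \pp{L_k}{z^\alpha_{k+1}} + Q^\alpha_{k+2} \pp{L_k}{z^\alpha_{k+2}} \bigg].
\]
The goal is to rewrite the right-hand side in the form $\sum_\alpha Q^\alpha_k \bE^\td_\alpha(L_k) + \Delta(C^\td)$, so that vanishing of the discrete Euler--Lagrange equations forces $\Delta(C^\td) = 0$.

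First I would re-index the last two terms using the forward shift. Writing $A^\alpha_k = Q^\alpha_k\, \partial L_{k-1}/\partial z^\alpha_k$ and $B^\alpha_k = Q^\alpha_k\, \partial L_{k-2}/\partial z^\alpha_k$, one checks directly that $\bS(A^\alpha_k) = Q^\alpha_{k+1}\,\partial L_k/\partial z^\alpha_{k+1}$ and $\bS^2(B^\alpha_k) = Q^\alpha_{k+2}\,\partial L_k/\partial z^\alpha_{k+2}$, so the symmetry condition becomes $0 = \sum_\alpha [\,Q^\alpha_k\,\partial L_k/\partial z^\alpha_k + \bS(A^\alpha_k) + \bS^2(B^\alpha_k)\,]$. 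Next I would add and subtract the diagonal contributions $A^\alpha_k + B^\alpha_k$ in order to assemble the discrete Euler operator. Using its explicit form $\bE^\td_\alpha(L_k) = \partial L_k/\partial z^\alpha_k + \partial L_{k-1}/\partial z^\alpha_k + \partial L_{k-2}/\partial z^\alpha_k$, obtained by shifting each $\bS^{-\ell}\partial L_k/\partial z^\alpha_{k+\ell}$ down to index $k$, the three diagonal terms combine into exactly $Q^\alpha_k \bE^\td_\alpha(L_k)$.

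The remaining terms are $\bS(A^\alpha_k) - A^\alpha_k + \bS^2(B^\alpha_k) - B^\alpha_k$, and the key step is to recognize these as a total forward difference. The first pair is simply $\Delta(A^\alpha_k)$; the subtlety, absent in the first order case, is the $\bS^2$ term, which I would handle via the operator identity $\bS^2 - \mathds{1} = \Delta\,(\bS + \mathds{1})$, giving $\bS^2(B^\alpha_k) - B^\alpha_k = \Delta(B^\alpha_k + \bS(B^\alpha_k))$. Collecting these yields
\[
\bS(A^\alpha_k) - A^\alpha_k + \bS^2(B^\alpha_k) - B^\alpha_k = \Delta\big(A^\alpha_k + B^\alpha_k + \bS(B^\alpha_k)\big),
\]
and since $\bS(B^\alpha_k) = Q^\alpha_{k+1}\,\partial L_{k-1}/\partial z^\alpha_{k+1}$, the bracketed quantity summed over $\alpha$ is precisely the claimed $C^\td$. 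Thus $0 = \sum_\alpha Q^\alpha_k \bE^\td_\alpha(L_k) + \Delta(C^\td)$, and on solutions of the discrete Euler--Lagrange equations $\bE^\td_\alpha(L_k)=0$ we conclude $\Delta(C^\td) = 0$. I expect the only genuine obstacle to be the careful index bookkeeping in the $\bS^2$ decomposition, which is what produces the third, mixed-shift term in $C^\td$; everything else mirrors the first order argument verbatim.
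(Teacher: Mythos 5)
Your proof is correct, and it follows essentially the same strategy as the paper's own proof of the first order case (Theorem~\ref{thm:discrete-Noether}); the paper in fact states Theorem~\ref{thm:order2-discrete-noether} without proof, so your argument fills that gap in exactly the intended way. All index bookkeeping checks out: $\bE^\td_\alpha(L_k)=\pp{L_k}{z^\alpha_k}+\pp{L_{k-1}}{z^\alpha_k}+\pp{L_{k-2}}{z^\alpha_k}$, the shift identities for $A^\alpha_k$ and $B^\alpha_k$ hold, and the factorization $\bS^2-\mathds{1}=\Delta\,(\bS+\mathds{1})$ is precisely what produces the third, mixed-shift term $Q^\alpha_{k+1}\pp{L_{k-1}}{z^\alpha_{k+1}}$ in $C^\td$.
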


\begin{remark}
As in Definition \ref{def:div-sym-group}, we can also introduce the notion of divergence symmetry in the discrete setting.  This more general notion of symmetry will not be used here since, as we have seen in the previous section, every divergence symmetry can be made into a variational symmetry by modifying the Lagrangian.
\end{remark}

As outlined in Section~\ref{sec:intro}, given a continuous Lagrangian functional $\mathcal{L}[u]$ with variational symmetry group $G$, our goal is to construct a discrete Lagrangian  $\mathcal{L}^\td[z]$ that will remain invariant under the action of $G$.  As the next example shows, in general, a standard discretization of $\mathcal{L}[u]$ will not preserve its symmetries.

\begin{example}
In an attempt to discretize the Euler elastica Lagrangian \eqref{eq:elastica-functional}, consider the discrete Lagrangian
\begin{equation}\label{eq:discrete-elastica-functional}
\mathcal{L}^\td = \sum_k L_k = \sum_k \frac{(u_{xx}^\td)^2}{2 (1+(u_x^\td)^2)^{5/2}} \cdot \sqrt{\Delta x_k \Delta x_{k+1}},
\end{equation}
where
\begin{equation}\label{eq:discrete-der}
u_x^\td = \frac{\Delta u_k}{\Delta x_k} = \frac{u_{k+1}-u_k}{x_{k+1}-x_k},\qquad u_{xx}^\td = \frac{1}{\sqrt{\Delta x_k \Delta x_{k+1}}}\bigg[\frac{\Delta u_{k+1}}{\Delta x_{k+1}} - \frac{\Delta u_k}{\Delta x_k}\bigg].
\end{equation}
One can verify that this discrete functional is invariant under translations, but not under rotations.
\end{example}

To construct a discrete Lagrangian functional $\mathcal{L}^\td[z]$ that will preserve the variational symmetries of a continuous Lagrangian $\mathcal{L}[u]$, we use the method of equivariant moving frames.

\section{Discrete Moving Frames}\label{sec:mf}

In this section we review the method of equivariant moving frames in the discrete setting.  We refer the reader to \cite{MMW-2013,MM-2018,O-2001} for a complete exposition of the method.

Let $G$ be an $r$-dimensional Lie group acting on $\mathbb{R}^{q+1}$, which is extended to $\J\sn$ via the product action. In the following, we assume that the action of $G$ on each fiber $\pi_n^{-1}(k)$ is (locally) free and regular, \cite{FO-1999}.  Recall that a Lie group $G$ acts freely on $\J\sn|_k = \pi_n^{-1}(k)$ if for all $z_k\sn \in \pi_n^{-1}(k)$ the isotropy subgroup $G_{z_k\sn} = \{g\,\in G\,|\, g\cdot z_k\sn = z_k\sn\}$ is trivial, i.e.\ $G_{z_k\sn} = \{e\}$.  The action is locally free if the isotropy subgroup $G_{z_k\sn}$ is discrete for all $z_k\sn \in \pi_n^{-1}(k)$.  This is equivalent to the fact that the orbits of the product group action have the same dimension as the group $G$.    By a result of Boutin, \cite{B-2002}, when the action of $G$ is (locally) effective on subsets of $\mathbb{R}^{q+1}$, local freeness on an open subset of $\pi_n^{-1}(k)$ can alway be achieved for a sufficiently large and finite $n$.  Finally, the action is regular if the orbits form a regular foliation.  When the action of $G$ on each fiber $\pi_n^{-1}(k)$ is (locally) free and regular, we say that $G$ acts (locally) freely and regularly on $\J\sn$.  

\begin{definition}
Let $G$ act (locally) freely and regularly on $\J\sn$.  A \emph{discrete (right) moving frame} is a $G$-equivariant map $\rho\colon \J\sn \to G$ satisfying
\begin{equation}\label{G-equivariance}
\rho(g\cdot z_k\sn) = \rho(z_k\sn)\, g^{-1},
\end{equation}
for all $g\in G$ where the product action is defined.
\end{definition}

To simplify the notation, we let 
\[
\rho_k = \rho(z_k\sn)
\]
denote the moving frame $\rho$ evaluated at the discrete jet $z_k\sn$.  In applications the construction of a (discrete) moving frame relies on the choice of a (discrete) cross-section $\mK \subset \J\sn$ to the group orbits.  

\begin{definition}
A subset $\mK \subset \J\sn$ is a \emph{discrete cross-section} to the group orbits if for each $k \in \mathbb{Z}$, the restriction $\mK|_k \subset \J\sn|_k = \pi^{-1}_n(k)$ is a submanifold of $\J\sn|_k$ transverse and of complementary dimension to the group orbits.
\end{definition}

In general, a cross-section $\mK \subset \J\sn$ is specified by a system of $r=\dim G$ difference equations
\[
\mK = \{E_\nu(z_n\sn) = 0\;|\; \nu = 1,\ldots,r\}.
\]
Once $\mK$ is fixed, the right moving frame at $z_k\sn$ is the unique group element $g=\rho_k=\rho(z_k\sn) \in G$ that sends $z_k\sn$ onto $\mK|_k$.  That is
\[
\rho_k \cdot z_k\sn \in \mK|_k.
\]
The coordinate expressions for the moving frame $\rho_k$ are obtained by solving the \emph{normalization equations}
\[
E_\nu(g\cdot z_k\sn) = 0,\qquad \nu=1,\ldots,r,
\]
for the group parameters $g=(g^1,\ldots,g^r)$.  

With a moving frame in hand, there is a systematic procedure, known as \emph{invariantization}, for constructing \emph{joint invariants} (also called \emph{discrete invariants} or \emph{difference invariants}).

\begin{definition}\label{def:invariantization}
The invariantization of the difference function $F(z_k\sn)$ is the joint invariant
\begin{equation}\label{invariantization}
\iota_k(F)(z_k\sn) = F(\rho_k\cdot z_k\sn).
\end{equation}
\end{definition}

The fact that the function in \eqref{invariantization} is invariant follows from the $G$-equivariant property \eqref{G-equivariance} that the right moving frame $\rho_k$ satisfies.  The operator $\iota_k$ is called the \emph{invariantization map} (with respect to $\rho_k$).  

Thus, given a discrete Lagrangian functional $\mathcal{L}^\td[z] = \sum_k L_k$ we can obtain a symmetry-preserving functional by invariantizing $\mathcal{L}^\td[z]$: 
\[
\iota(\mathcal{L}^\td[z]) = \sum_k \iota_k(L_k).
\]

\begin{example}
Consider the special Euclidean group action \eqref{SE(2)} acting on $z_k=(x_k,u_k)$:
\[
X_k = x_k \cos\varphi - u_k\sin \varphi + a,\qquad
U_k = x_k \sin \varphi + u_k \cos \varphi + b.
\]
A moving frame is obtained by selecting the cross-section
\[
\mathcal{K} = \{x_k=u_k = u_{k+1}=0\}.
\]
We observe that this cross-section is equivalent to 
\[
\mathcal{K} = \{x_k = u_k = u_x^\td = 0\},\qquad\text{where}\qquad u^\td_x = \frac{\Delta u_k}{\Delta x_k},
\]
the latter being a discrete approximation of the cross-section used in the continuous setting, \cite{KO-2003}. Solving the normalization equations $X_k = U_k = U_{k+1}=0$ for the group parameters $a$, $b$, $\varphi$, we obtain
\begin{equation}\label{eq: moving frame 1}
a = -\frac{x_k \Delta x_k + u_k \Delta u_k}{\ell_k},\qquad
b = \frac{x_k \Delta u_k - u_k \Delta x_k}{\ell_k},\qquad
\varphi = -\tan^{-1}\bigg(\frac{\Delta u_k}{\Delta x_k}\bigg),
\end{equation}
where
\[
\ell_k = \sqrt{\Delta x_k^2+\Delta u_k^2}.
\]
Using the invariantization map \eqref{invariantization} we have that
\[
\iota_k(\Delta x_k) = \ell_k\qquad \text{and}\qquad \iota_k(\Delta u_{k+1}) = \frac{D_k}{\ell_k},
\]
where
\[
D_k = \det \begin{bmatrix}
\Delta x_k & \Delta x_{k+1} \\
\Delta u_k & \Delta u_{k+1}
\end{bmatrix}.
\]
In the literature, and as in Definition \ref{def:invariantization}, it is customary to invariantize a discrete function $F(z\sn_k)$ with respect to $\iota_k$ solely.  In the following we expand this practice by using $\iota_k$ and $\iota_{k+1}$ simultaneously. For example, we invariantize $u_{xx}^\td$ given in \eqref{eq:discrete-der} as follows
\[
\iota(u_{xx}^\td) :=  \frac{1}{\sqrt{\iota_k(\Delta x_k) \iota_{k+1}(\Delta x_{k+1})}} \bigg[\frac{\iota_k(\Delta u_{k+1})}{\iota_{k+1}(\Delta x_{k+1})} - \frac{\iota_k(\Delta u_k)}{\iota_k(\Delta x_k)}\bigg] = \frac{1}{\sqrt{\ell_k\ell_{k+1}}}\cdot \frac{D_k}{\ell_k \ell_{k+1}} = \frac{D_k}{(\ell_k \ell_{k+1})^{3/2}}.
\]
We also invariantize $\Delta x_{k+1}$ using $\iota_{k+1}(\Delta x_{k+1}) = \ell_{k+1}$. Invariantizing the discrete Lagrangian functional \eqref{eq:discrete-elastica-functional}, we obtain
\begin{equation}\label{eq:discrete-elastica-invariant-L}
L_k^\iota=\iota(L_k)  = \frac{D_k^2}{2(\ell_k \ell_{k+1})^{5/2}}.
\end{equation}
Computing the corresponding discrete Euler--Lagrange equations yields
\begin{equation}\label{eq:discrete-elastica-EL}
\begin{aligned}
0 &=\bE_x^\td(L_k^\iota) = - \frac{D_{k-2} \Delta u_{k-2}}{(\ell_{k-2}\ell_{k-1})^{5/2}}+\frac{D_{k-1}(\Delta u_{k-1}+\Delta u_k)}{(\ell_{k-1}\ell_k)^{5/2}}-\frac{D_k\Delta u_{k+1}}{(\ell_k\ell_{k+1})^{5/2}}\\
&-\frac{5D_{k-2}^2\ell_{k-2}\Delta x_{k-1}}{4\ell_{k-1}(\ell_{k-2}\ell_{k-1})^{7/2}} -\frac{5 D_{k-1}^2 \ell_k \Delta x_{k-1}}{4\ell_{k-1}(\ell_{k-1}\ell_k)^{7/2}}+\frac{5D_{k-1}^2 \ell_{k-1}\Delta x_k}{4\ell_k(\ell_{k-1}\ell_k)^{7/2}}+\frac{5D_k^2 \ell_{k+1}\Delta x_k}{4\ell_k(\ell_k\ell_{k+1})^{7/2}},\\
0 &=\bE_u^\td(L_k^\iota) = \frac{D_{k-2}\Delta x_{k-2}}{(\ell_{k-2}\ell_{k-1})^{5/2}} -\frac{D_{k-1}(\Delta x_{k-1}+\Delta x_k)}{(\ell_{k-1}\ell_k)^{5/2}} +\frac{D_k\Delta x_{k+1}}{(\ell_k \ell_{k+1})^{5/2}} \\
&- \frac{5D_{k-2}^2\ell_{k-2}\Delta u_{k-1}}{4\ell_{k-1}(\ell_{k-2}\ell_{k-1})^{7/2}} 
-\frac{5D_{k-1}^2\ell_k\Delta u_{k-1}}{4\ell_{k-1}(\ell_{k-1}\ell_k)^{7/2}}+\frac{5D_{k-1}^2\ell_{k-1}\Delta u_k}{4\ell_k(\ell_{k-1}\ell_k)^{7/2}} 
+\frac{5D_k^2\ell_{k+1}\Delta u_k}{4\ell_k(\ell_k \ell_{k+1})^{7/2}}.
\end{aligned}
\end{equation}

Since the discrete Lagrangian \eqref{eq:discrete-elastica-invariant-L} is invariant under the special Euclidean group action, Noether's Theorem applies.  Using Theorem \ref{thm:order2-discrete-noether}, and recalling the infinitesimal generators \eqref{eq:elastica-v}, we obtain the conserved quantities
\begin{align*}
C_1^\td &= \pp{L_{k-1}^\iota}{x_k} + \pp{L_{k-2}^\iota}{x_k} + \pp{L_{k-1}^\iota}{x_{k+1}} \\
&= \frac{D_{k-1}\Delta u_k}{(\ell_{k-1}\ell_k)^{5/2}}-\frac{D_{k-2}\Delta u_{k-2}}{(\ell_{k-2}\ell_{k-1})^{5/2}} - \frac{5D_{k-1}^2\ell_k \Delta x_{k-1}}{4\ell_{k-1}(\ell_{k-1}\ell_k)^{7/2}}-\frac{5D_{k-2}^2\ell_{k-2}\Delta x_{k-1}}{4\ell_{k-1}(\ell_{k-2}\ell_{k-1})^{7/2}},\\
C_2^\td &= \pp{L_{k-1}^\iota}{u_k} + \pp{L_{k-2}^\iota}{u_k} + \pp{L_{k-1}^\iota}{u_{k+1}} \\
&= \frac{D_{k-2}\Delta x_{k-2}}{(\ell_{k-2}\ell_{k-1})^{5/2}} - \frac{D_{k-1}\Delta x_k}{(\ell_{k-1}\ell_k)^{5/2}}-\frac{5D_{k-1}^2\ell_k \Delta u_{k-1}}{4\ell_{k-1}(\ell_{k-1}\ell_k)^{7/2}}-\frac{5D_{k-2}^2 \ell_{k-2}\Delta u_{k-1}}{4\ell_{k-1}(\ell_{k-2}\ell_{k-1})^{7/2}},\\
C_3^\td &= - u_k \pp{L_{k-1}^\iota}{x_k} + x_k \pp{L_{k-1}^\iota}{u_k} - u_k \pp{L_{k-2}^\iota}{x_k} + x_k \pp{L_{k-2}^\iota}{u_k} - u_{k+1}\pp{L_{k-1}^\iota}{x_{k+1}} + x_{k+1} \pp{L_{k-1}^\iota}{u_{k+1}} \\
&= x_k C_2^\td - u_k C_1^\td + \frac{D_{k-1}}{(\ell_{k-1}\ell_k)^{5/2}}(\Delta x_{k-1}\Delta x_k + \Delta u_{k-1}\Delta u_k).
\end{align*}
\end{example}

\section{Invariant Variational Schemes}\label{sec-sym-preserving-schemes}

Given a continuous Lagrangian functional $\mathcal{L}[u]$, with Euler--Lagrange equations $\bE(L)=0$, we now describe a procedure for constructing a numerical scheme that will preserve its variational symmetries and thereby  be exactly conservative.

\begin{enumerate}
\item Let $\mathcal{L}[u]=\int L(x,u\n)\, \mathrm{d}x$ be a Lagrangian functional with variational symmetry group $G$, and let $\bE(L)=0$ be the corresponding Euler--Lagrange equations.

\item Introduce a discrete Lagrangian functional $\mathcal{L}^\td[z]=\sum_k L(z_k\sn)$, whose continuous limit is $\mathcal{L}[u]$.  In general $\mathcal{L}^\td[z]$ will not be invariant under the product action of $G$. \label{L discretization}

\item Assuming the product action is (locally) free and regular on $\J\sn$, 
 construct a discrete moving frame. As outlined in \cite{BV-2017}, and proved for curves in \cite{O-2001-1} and generalized in \cite{MM-2018}, for the discrete moving frame to have a well defined continuous limit, i.e. for the moving frame to converge to a differential moving frame and the discrete invariant Lagrangian and Euler--Lagrange equations to converge to their invariant differential counterparts, use a cross-section involving finite difference approximations of derivatives such as in \eqref{eq:discrete-der}. \label{moving frame}

\item Invariantize the discrete Lagrangian $\mathcal{L}^\td[z] = \sum_k L_k$ introduced in step \ref{L discretization} using the moving frame constructed in step \ref{moving frame}.

\item Compute the Euler--Lagrange equations $\bE^\td(\iota(L_k))=0$ of the invariantized Lagrangian $\iota(L_k)$.  These provide a numerical scheme approximating $\bE(L)=0$ that preserve the variational symmetry group $G$.  By Noether's Theorem, the numerical scheme $\bE^\td(\iota(L_k))=0$ also conserves the associated conserved quantities.
\end{enumerate}

If the Lagrangian functional $\mathcal{L}[u] = \int L(x,u\n)\, \mathrm{d}x$ admits a divergence symmetry group, the above steps still apply provided $\mathcal{L}[u]$ is replaced by the  modified functional $\overline{\mathcal{L}}[u] = \int \overline{L}\, \mathrm{d}x = \int (L+\zeta_x)\, \mathrm{d}x$ as described in Section \ref{ref:div-sym}.  

\begin{example}
To show how the above procedure works for a Lagrangian admitting a divergence symmetry group, let us continue Example \ref{ex:divergence invariant lagrangian}.  Starting from the modified Lagrangian functional \eqref{eq:modified lagrangian}, a possible discretization of $\overline{\mathcal{L}}[u]$ is
\begin{equation}\label{eq:discrete-modified-L}
\overline{\mathcal{L}}^\td = \sum_{k} \bigg[\bigg(\frac{\Delta u_k}{\Delta x_k}\bigg)^2 - \frac{1}{u_k^2}+\frac{\Delta \zeta_k}{\Delta x_k} \bigg]\Delta x_k = \sum_k \frac{(\Delta u_k)^2}{\Delta x_k} - \frac{\Delta x_k}{u_k^2} + \Delta \zeta_k.
\end{equation}
This Lagrangian is not invariant under the product action
\[
X_k = \frac{\alpha x_k + \beta}{\delta x_k + \gamma},\qquad 
U_k = \frac{u_k}{\delta x_k + \gamma},\qquad 
g\cdot \zeta_k = \zeta_k + \frac{\delta u_k^2}{\delta x_k + \gamma},\qquad \alpha \gamma - \beta\delta = 1.
\]
To obtain a symmetry-preserving Lagrangian, we construct a moving frame.  Consider the cross-section
\[
\mathcal{K} = \{x_k = 0, u_k = u_{k+1} = 1\},
\]
which is equivalent to $\mathcal{K} = \big\{x_k=0, u_k=1, u_x^\td=\frac{\Delta u_k}{\Delta x_k}=0\big\}$.  Solving the normalization equations $X_k=0, U_k=U_{k+1}=1$, we obtain the moving frame 
\begin{equation}\label{eq:moving frame 2}
\alpha = \frac{1}{u_k},\qquad 
\beta = -\frac{x_k}{u_k},\qquad
\delta = \frac{\Delta u_k}{\Delta x_k},\qquad
\gamma= \frac{u_k\, \Delta x_k - x_k\, \Delta u_k}{\Delta x_k}.
\end{equation}
Invariantizing \eqref{eq:discrete-modified-L}
\[
\iota(\overline{\mathcal{L}}^\td) = \iota_k(\overline{\mathcal{L}}^\td) = \sum_k \frac{\Delta u_k^2}{\Delta x_k} - \frac{\Delta x_k}{u_k u_{k+1}} + \Delta \zeta_k.
\]
The corresponding Euler--Lagrange equations are
\begin{equation}\label{eq: div invariant scheme}
\begin{aligned}
0 &= \bE_u^\td(\iota(\overline{\mathcal{L}}^\td)) = -2\bigg(\frac{\Delta u_k}{\Delta x_k} - \frac{\Delta u_{k-1}}{\Delta x_{k-1}}\bigg) + \frac{1}{u_k^2}\bigg(\frac{\Delta x_k}{u_{k+1}}+\frac{\Delta x_{k-1}}{u_{k-1}}\bigg),\\
0 &= \bE_x^\td(\iota(\overline{\mathcal{L}}^\td)) = \bigg(\frac{\Delta u_k}{\Delta x_k}\bigg)^2 - \bigg(\frac{\Delta u_{k-1}}{\Delta x_{k-1}}\bigg)^2 + \frac{1}{u_k}\bigg(\frac{1}{u_{k+1}}-\frac{1}{u_{k-1}}\bigg).
\end{aligned}
\end{equation}
Applying Noether's Theorem \ref{thm:discrete-Noether}, with the infinitesimal generators \eqref{eq:v-ex2}, we obtain the conserved quantities
\begin{equation}\label{eq: div invariants}
\begin{aligned}
C_1^\td &= (u_x^\td)^2 + \frac{1}{u_k u_{k+1}},\\
C_2^\td &= \frac{x_k+x_{k+1}}{u_k u_{k+1}} + 2u_x^\td \cdot \frac{u_{k+1} x_k - x_{k+1} u_k}{\Delta x_k},\\
C_3^\td &= \frac{x_k x_{k+1}}{u_k u_{k+1}}+\frac{(u_{k+1} x_k - x_{k+1} u_k)^2}{(\Delta x_k)^2}.
\end{aligned}
\end{equation}
These conserved quantities are independent and satisfy
\begin{equation}\label{eq:discrete C-relation}
\frac{(C_2^\td)^2}{4} - C_1^\td C_3^\td+1 = \frac{1}{4}\bigg(\frac{\Delta x_k}{u_k u_{k+1}}\bigg)^2.
\end{equation}
We note that, in the continuous limit, the equality \eqref{eq:discrete C-relation} converges to \eqref{eq:C-relation}.
\end{example}

\section{Numerical Simulations}\label{sec:num-sim}

In this section we conduct numerical tests for the
invariant variational schemes
\eqref{eq:discrete-elastica-EL} and \eqref{eq: div invariant
  scheme}. We also consider a version of
  \eqref{eq:discrete-elastica-EL} where the distance between points is
  constant. Computations were performed using nonlinear solvers from
  {\tt scipy}'s {\tt optimize} module. In particular, in
  Section~\ref{sec:ee} we use {\tt root} with the Jacobian given
  analytically, while in Section~\ref{sec:dl} we use {\tt fsolve}. All
  nonlinear systems are solved up to an absolute tolerance of
  $10^{-13}$.   For a numerical approximation $u_k$, simulating an exact
  solution $u=u(x_k)$, we will examine the error in the $l_{\infty}$
  norm using the formula
  \begin{equation} \label{eq: infty norm}
    \enorm{u_k - u} := \max{j}{\abs{ u_j - u(x_j)}}.
  \end{equation}
In addition, when benchmarking our simulations we use the following definition.
\begin{definition} \label{def:eoc}
  Given two sequences $a(i), b(i)$, the \emph{experimental order of
  convergence} (EOC) is described by
\[
    \EOC{a,b;i} =
    \frac{\operatorname{log}\left(\frac{a(i+1)}{a(i)}\right)}%
    {\operatorname{log}\left(\frac{b(i+1)}{b(i)}\right)}.
\]
In the sequel $a(i)$ represents a sequence of $l_\infty$ errors given by \eqref{eq: infty norm}, while $b(i)$ represents
  either a step size type parameter or the reciprocal of the number of
  steps taken. 
\end{definition}
  
\subsection{Euler Elastica} \label{sec:ee}

From the perspective of symmetry, invariants, and moving frames, the free Euler elastica equation~\eqref{eq: free elastica} was previously considered in \cite{MRHP-2019}.  Using an approach inspired by the group foliation method, \cite{TV-2018}, the $\text{SE}(2)$ invariance of the Euler--Lagrange equations implies that these equations can be re-expressed in terms of discrete curvature, the arc-length function $\ell_{k} = \sqrt{\Delta x_k^2+\Delta u_k^2}$ and their shifts. 
Solving the Euler--Lagrange equations for these two invariants, the solution $z_k=(x_k,u_k)$ to the original problem is found via a ``reconstruction" process requiring the solution of a system of finite difference equations for the (left) moving frame. In this paper we omit this two step process and solve the Euler--Lagrange equations directly for $z_k=(x_k,u_k)$.  From a numerical perspective, it is not a priori clear if the more involved approach used in \cite{MRHP-2019} gives better results.  On the other hand, the approach introduced in this paper is, we believe, more straightforward to implement.

The parametrized solution to the Euler elastica equation~\eqref{eq: free elastica} is
\begin{equation}\label{eq: ex sol free elastica}
x(s) = \sqrt{\frac{2}{\alpha}} E\bigg(\text{am}\bigg(\sqrt{\frac{\alpha}{2}}s,-1\bigg)-1\bigg)-s,\qquad
u(s) = \sqrt{\frac{2}{\alpha}}\, \text{sn}\bigg(\sqrt{\frac{\alpha}{2}}s,-1\bigg),
\end{equation}
where $\text{sn}(u,k)$ is the Jacobian elliptic sine function, $E(u,k)$ is the incomplete elliptic integral of the second kind, and $\text{am}(t,k)$ is the Jacobian amplitude function.  At the discrete level, the Euler--Lagrange equations \eqref{eq:discrete-elastica-EL} provide a nonlinear system of two equations for the unknown $z_{k+2} = (x_{k+2},u_{k+2})$.  Once the initial conditions $z_0, z_1, z_2, z_3$ are fixed using \eqref{eq: ex sol free elastica}, the numerical solution evolves according to \eqref{eq:discrete-elastica-EL} and there is no way to control the distance between consecutive points, which is generally not numerically desirable.  For small values of $\ell_{k-2}$, $\ldots$, $\ell_{k+1}$, the denominators occurring in the Euler--Lagrange equations are very small, to the point of round-off errors dominating the numerical solution when using standard double precision arithmetic.  Therefore, to implement \eqref{eq:discrete-elastica-EL} we multiplied the Euler--Lagrange equations by $(\ell_{k-1}\ell_k)^{5/2}$ to obtain the scaled equations
\begin{equation}\label{eq: eefree scheme}
\begin{aligned} 
0 &=\widetilde{\bE}_x^\td= -D_{k-2}\Delta u_{k-2} \bigg(\frac{\ell_k}{\ell_{k-2}}\bigg)^{5/2} + D_{k-1}(\Delta u_{k-1}+\Delta u_k) - D_k \Delta u_{k+1}\bigg(\frac{\ell_{k-1}}{\ell_{k+1}}\bigg)^{5/2} \\ 
& -\frac{5D_{k-2}^2\Delta x_{k-1}}{4\ell_{k-1}^2}\bigg(\frac{\ell_k}{\ell_{k-2}}\bigg)^{5/2} - \frac{5D_{k-1}^2 \Delta x_{k-1}}{4 \ell_{k-1}^2} + \frac{5D_{k-1}^2\Delta x_k}{4\ell_k^2} + \frac{5D_k^2\Delta x_k}{4\ell_k^2}\bigg(\frac{\ell_{k-1}}{\ell_{k+1}}\bigg)^{5/2},\\ 
0 &=\widetilde{\bE}_u^\td= D_{k-2}\Delta x_{k-2}\bigg(\frac{\ell_k}{\ell_{k-2}}\bigg)^{5/2} - D_{k-1}(\Delta x_{k-1}+\Delta x_k)+D_k\Delta x_{k+1}\bigg(\frac{\ell_{k-1}}{\ell_{k+1}}\bigg)^{5/2} \\
&- \frac{5D_{k-2}^2\Delta u_{k-1}}{4\ell_{k-1}^2}\bigg(\frac{\ell_k}{\ell_{k-2}}\bigg)^{5/2}-\frac{5D_{k-1}^2\Delta u_{k-1}}{4\ell_{k-1}^2} + \frac{5D_{k-1}^2\Delta u_k}{4\ell_k^2} + \frac{5D_k^2\Delta u_k}{4\ell_k^2}\bigg(\frac{\ell_{k-1}}{\ell_{k+1}}\bigg)^{5/2}.
\end{aligned}
\end{equation}
Supplying the Jacobian entries
\begin{align*}
\pp{\widetilde{\bE}_x^\td}{x_{k+2}} &=
\bigg(\frac{\ell_{k-1}}{\ell_{k+1}}\bigg)^{5/2} \bigg(\Delta u_k \Delta u_{k+1} 
- \frac{5 D_k \Delta u_k \Delta x_k}{2\ell_k^2}
+ \frac{5D_k\Delta u_{k+1}\Delta x_{k+1}}{2\ell_{k+1}^2} 
- \frac{25 D_k^2 \Delta x_k \Delta x_{k+1}}{8\ell_k^2\ell_{k+1}^2}\bigg),\\ 
\pp{\widetilde{\bE}_x^\td}{u_{k+2}} &= \bigg(\frac{\ell_{k-1}}{\ell_{k+1}}\bigg)^{5/2}\bigg(-D_k
-\Delta u_{k+1}\Delta x_k
+\frac{5D_k\Delta x_k^2}{2\ell_k^2}
+\frac{5D_k\Delta u_{k+1}^2}{2\ell_{k+1}^2}
-\frac{25 D_k^2\Delta u_{k+1}\Delta x_k}{8\ell_k^2\ell_{k+1}^2}\bigg),\\
\pp{\widetilde{\bE}_u^\td}{x_{k+2}} &= \bigg(\frac{\ell_{k-1}}{\ell_{k+1}}\bigg)^{5/2}\bigg(D_k 
-\Delta u_k\Delta x_{k+1}
- \frac{5 D_k \Delta u_k^2}{2\ell_k^2}
-\frac{5D_k\Delta x_{k+1}^2}{2\ell_{k+1}^2}
-\frac{25D_k^2\Delta u_k \Delta x_{k+1}}{8\ell_k^2\ell_{k+1}^2}\bigg),\\
\pp{\widetilde{\bE}_u^\td}{u_{k+2}} &= \bigg(\frac{\ell_{k-1}}{\ell_{k+1}}\bigg)^{5/2}\bigg(
\Delta x_k\Delta x_{k+1}
+\frac{5D_k\Delta u_k \Delta x_k}{2 \ell_k^2} 
- \frac{5 D_k \Delta u_{k+1} \Delta x_{k+1}}{2\ell_{k+1}^2}
-\frac{25 D_k^2\Delta u_k \Delta u_{k+1}}{8 \ell_k^2\ell_{k+1}^2}\bigg),
\end{align*}
to {\tt root} in {\tt scipy.optimize} yields an ill-conditioned problem.  To
improve the conditioning of the Jacobian matrix, we added to it a small constant multiple of the identity matrix. In our simulations
this constant is $10^{-3}$, and we note that the specific choice of this
constant depends heavily on $\ell_k$. To initialize the scheme we fixed
\begin{equation} \label{eq:linit}
  \ell_0
  =
  \ell_1
  =
  \ell_2
  =
  0.01
  ,
\end{equation}
and set $s_0=-2$ in the exact solution \eqref{eq: ex sol free elastica}.  Substituting the exact solution in \eqref{eq:linit} we solved for $s_1<s_2<s_3$ in order to obtain the initial conditions $z_k=(x(s_k),u(s_k))$, $k=0,1,2,3$. Running the simulation for
$500$ steps we obtain Figure \ref{fig:eefree}, which we compare against
an exact solution where we assume that $\ell_k$ remains uniform for
$500$ steps.
  \begin{figure}[h!]
  \centering
    \includegraphics[width=0.55\textwidth]{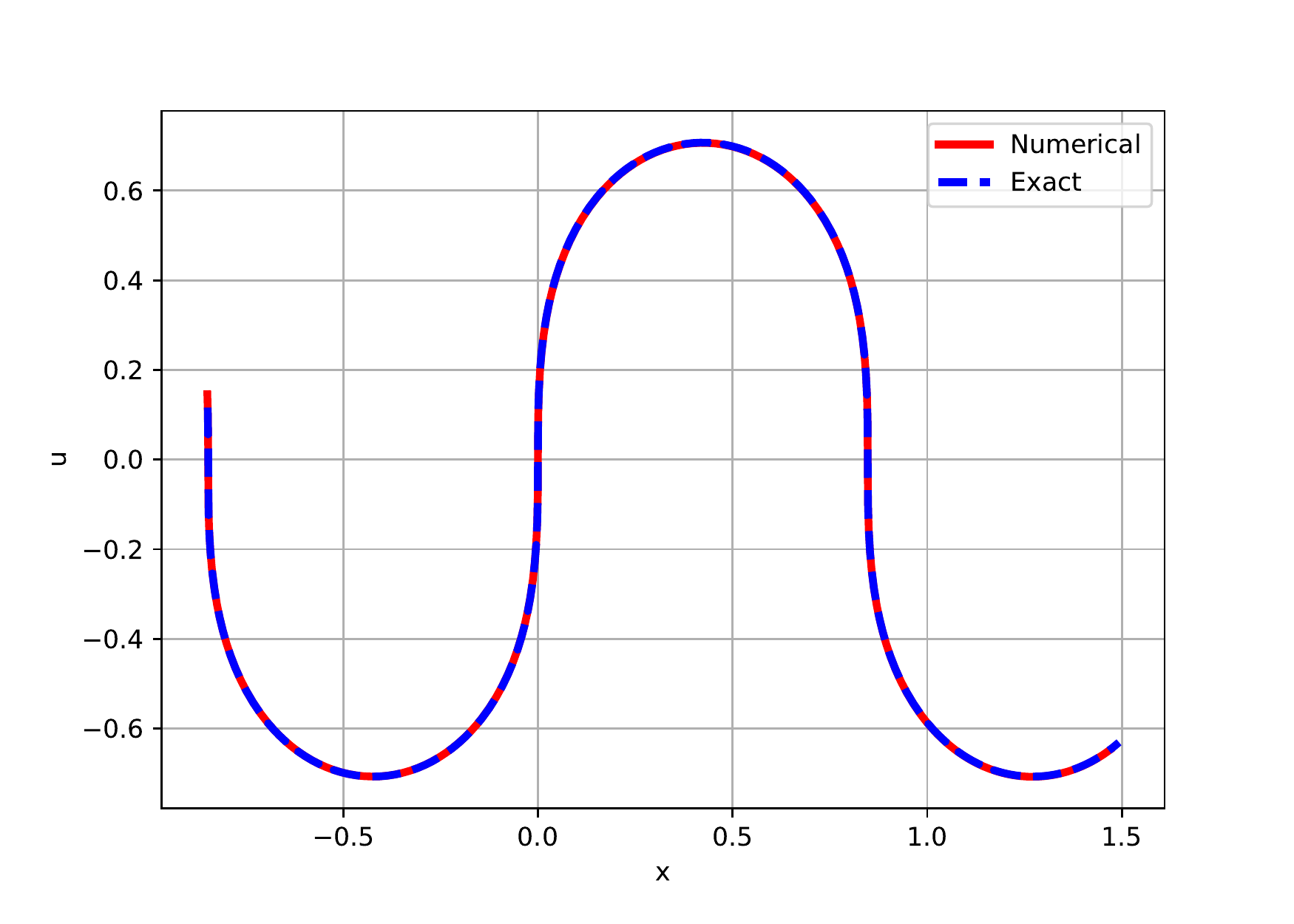}
    \caption{The numerical simulation of solution \eqref{eq: ex
        sol free elastica} using scheme \eqref{eq: eefree
        scheme} superimposed with the exact solution with initial condition $s_0=-2$
      and satisfy
      \eqref{eq:linit}.\label{fig:eefree}}
\end{figure}
We observe that our numerical simulation is qualitatively accurate,
although the numerical solution and the exact solution with uniform
$\ell_k$ evolve at slightly different rates.

To improve on the previous results, and to control the length between neighboring points, we now consider the constrained invariant Lagrangian 
\begin{equation}\label{eq: constrained lagrangian}
L^c_k = \iota(L_k) + \lambda \big(\sqrt{\Delta x_k^2+\Delta u_k^2} - \ell\big),
\end{equation}
where $\ell>0$ is a positive constant, $\lambda$ is a Lagrange multiplier, and $\iota(L_k)$ is given in \eqref{eq:discrete-elastica-invariant-L}.  We note that since $\sqrt{\Delta x_k^2+\Delta u_k^2} - \ell$ is invariant under translations and rotations, the constrained Lagrangrian \eqref{eq: constrained lagrangian} is $\text{SE}(2)$ invariant.  After the multiplication by $\ell^5$, to avoid small denominators, 
the resulting Euler--Lagrange equations are
\begin{equation}\label{eq: constrained scheme}
\begin{aligned}
0=&\ell^5\bE_x^\td(L_k^c) = -D_{k-2} \Delta u_{k-2}+D_{k-1}(\Delta u_{k-1}+\Delta u_k)-D_k\Delta u_{k+1}\\
& + \frac{5}{4\ell^2}[-D_{k-2}^2\Delta x_{k-1} + D_{k-1}^2 (\Delta x_k - \Delta x_{k-1}) +D_k^2 \Delta x_k]-\alpha\mu\,\ell^4(\Delta x_{k-1} - \Delta x_k),\\
0=&\ell^5\bE_u^\td(L_k^c) = D_{k-2}\Delta x_{k-2} - D_{k-1}(\Delta x_{k-1}+\Delta x_k) + D_k\Delta x_{k+1} \\
& + \frac{5}{4\ell^2}[- D_{k-2}^2 \Delta u_{k-1} + D_{k-1}^2 (\Delta u_k -\Delta u_{k-1}) + D_k^2 \Delta u_k]-\alpha\mu\, \ell^4(\Delta u_{k-1}-\Delta u_k),
\end{aligned}
\end{equation}
where we made the substitution $\lambda = -\alpha\mu$.  In the continuous limit, the equations \eqref{eq: constrained scheme} converge to
\begin{equation}\label{eq: limit}
-u_s\bigg(\kappa_{ss} + \frac{\kappa^3}{2}+\alpha\mu\, \kappa\bigg)=0,\qquad
x_s\bigg(\kappa_{ss} + \frac{\kappa^3}{2}+\alpha\mu\, \kappa\bigg)=0,
\end{equation}
respectively.  Therefore, the difference equations \eqref{eq: constrained scheme} provide an approximation of the general Euler elastica equation 
\begin{equation}\label{eq:gen Euler}
\kappa_{ss} + \frac{\kappa^3}{2}+\alpha\mu\, \kappa=0.
\end{equation}
The solution to this ordinary differential equation depends on the value of $\mu$, \cite{DHMV-2008}.  Some of our solutions differ from those appearing in \cite{DHMV-2008}, but have been checked with \texttt{Mathematica} to indeed satisfy the Euler elastica equation:
\begin{description}
\item[$\bm{\mu \in (-1,1)}$:] Let $a = \sqrt{\dfrac{2(1-\mu)}{\alpha}}$ and $c=\sqrt{\dfrac{2(1+\mu)}{\alpha}}$, then
\begin{equation} \label{eqn:case1}
x(s) = c\, E\bigg(\text{am}\bigg(\frac{c\alpha}{2}s,-\frac{a^2}{c^2}\bigg),-\frac{a^2}{c^2}\bigg)-s,\qquad
u(s) = a\, \text{sn}\bigg(\frac{c\alpha}{2}s,-\frac{a^2}{c^2}\bigg),
\end{equation}
where $\text{sn}(u,k)$ is the Jacobian elliptic sine function, $E(u,k)$ is the incomplete elliptic integral of the second kind, and $\text{am}(t,k)$ is the Jacobian amplitude function.

\item[$\bm{\mu = -1}$:] The solution is
\begin{equation} \label{eqn:case2}
x(s) = \frac{2\tanh(\sqrt{\alpha}s)}{\sqrt{\alpha}}-s,\qquad
u(s) = \frac{2\,\text{sech}(\sqrt{\alpha}s)}{\sqrt{\alpha}}.
\end{equation}

\item[$\bm{\mu<-1}$:] Let $a = \sqrt{\dfrac{2(1-\mu)}{\alpha}}$ and $c=\sqrt{-\dfrac{2(1+\mu)}{\alpha}}$, then 
\begin{equation} \label{eqn:case3}
x(s) = c \, E\bigg(\text{am}\bigg(\frac{c\alpha}{2} s,1-\frac{a^2}{c^2}\bigg),1-\frac{a^2}{c^2}\bigg)+\mu s,\qquad
u(s) = c \, \text{dn}\bigg(\frac{c\alpha}{2}s,1-\frac{a^2}{c^2}\bigg),
\end{equation}
where $\text{dn}$ is the delta amplitude function.  
\end{description}

For the numerical implementation of \eqref{eq: constrained scheme}, we note that the two equations are equivalent.  This can be seen by expressing the two equations in the polar coordinates 
\[
\Delta x_k = \ell \cos\theta_k,\qquad \Delta u_k = -\ell\sin\theta_k.
\]
One then finds that
\[
\bE_x^\td(L_k^c)\big[\theta_k+\tfrac{\pi}{2}\big] = \bE_u^\td(L_k^c)\big[\theta_k\big].
\]
To decide which equation from \eqref{eq: constrained scheme} to
choose, we consider their continuous limit \eqref{eq: limit} and note
that when $\Delta u_k \approx u_s$ is close to zero, the first
equation in \eqref{eq: limit} almost vanishes.  Similarly, when
$\Delta x_k \approx x_s$ is close to zero, the second equation in
\eqref{eq: limit} almost vanishes.  Thus, our code for the
implementation of the scheme follows Algorithm \ref{alg:1}.

\begin{algorithm} 
\caption{Constrained Lagrangian Implementation \label{alg:1}}
\begin{algorithmic}
\IF{$|\Delta x_k| < |\Delta u_k|$}
\STATE Solve $\{\ell^5\bE^\td_x(L_k^c)=0,\, \Delta x_{k+1}^2+\Delta u_{k+1}^2-\ell^2 =0\}$ for $(x_{k+2},u_{k+2})$.
\ELSE
\STATE Solve $\{\ell^5\bE^\td_u(L_k^c)=0,\, \Delta x_{k+1}^2+\Delta u_{k+1}^2-\ell^2 =0\}$ for $(x_{k+2},u_{k+2})$.
\ENDIF
\end{algorithmic}
\end{algorithm}

\noindent Notice that once the Euler--Lagrange equation is selected, the second equation used is always $\Delta x_{k+1}^2+\Delta u_{k+1}^2-\ell^2 =0$ to guarantee that the distance between points is constant.  Before sharing our numerical results, we note that the conserved quantities for the Euler--Lagrange equations \eqref{eq: constrained scheme} are
\begin{equation} \label{eq: constrained invariants}
\begin{aligned}
C_1^\td &= -\alpha\mu\,\ell^4\Delta x_{k-1}+ D_{k-1}\Delta u_k - D_{k-2}\Delta u_{k-2} - \frac{5D_{k-1}^2 \Delta x_{k-1}}{4\ell^2}-\frac{5D_{k-2}^2\Delta x_{k-1}}{4\ell^2},\\
C_2^\td &= - \alpha\mu\, \ell^4\Delta u_{k-1} + D_{k-2}\Delta x_{k-2} - D_{k-1}\Delta x_k-\frac{5D_{k-1}^2 \Delta u_{k-1}}{4\ell^2}-\frac{5D_{k-2}^2\Delta u_{k-1}}{4\ell^2},\\
C_3^\td &= x_k C_2^\td - u_k C_1^\td + D_{k-1}(\Delta x_k \Delta x_{k-1}+\Delta u_k \Delta u_{k-1}).
\end{aligned}
\end{equation}

We begin by benchmarking our scheme against the exact solution where
$\mu=-1$ and $\alpha=4$. In this case the exact solution is given by
\eqref{eqn:case2} and it forms a single loop centered at $x=0$ with
$u\to 0$ as $x\to\pm \infty$. We initialize the scheme the same way we
did for \eqref{eq: eefree scheme} using $s_0=-2$ and the fixed value
of $\ell$ to determine the initial data $z_0, z_1, z_2, z_3$ through
the exact solution. While benchmarking we decrease the length $\ell$
and increase the number of steps proportionally to fix the domain of
the simulation. Furthermore, we measure the $l_{\infty}$ error of both
of $x$- and $u$-components of the solution. We observe in Table
\ref{tab:ee} that, experimentally, our scheme is second
order.  This result is interesting as the moving frame \eqref{eq: moving frame 1} used to construct the invariant variational scheme is a first order approximation of its continuous counterpart, \cite{KO-2003}, and the non-invariant Lagrangian \eqref{eq:discrete-elastica-functional} is also a first order approximation of \eqref{eq:elastica-functional}.  This gain in the order of convergence obtained by invariantizing a numerical scheme has also been observed in \cite{K-2007,K-2008}.
\begin{table}[H]
  \centering
    \small{
  \begin{tabular}{|c|c|c||c|c||c|c|}
    \hline
    $i$ & $\ell$ & $\steps$ & $\enorm{x_k - x} =: \err{x}$ & $\EOC{\err{x},\ell;i-1}$ &
                                                                                $\enorm{u_k-u}
                                                                                      =:
                                                                                      \err{u}$
    & $\EOC{\err{u},\ell;i-1}$ \\[0.05cm]
    \hline
    1 & 0.02 & 200 & 2.98e-3 & 9.21 & 1.40e-2 & 6.15 \\
    2 & 0.01 & 400 & 7.14e-4 & 2.03 & 3.41e-3 & 1.98 \\
    3 & 0.005 & 800 & 1.74e-4 & 2.01 & 8.33e-4 & 1.98 \\
    4 & 0.0025 & 1600 & 4.24e-5 & 2.02 & 2.05e-4 & 1.97 \\
    \hline
  \end{tabular}
  }
    \caption{The $l_{\infty}$ error and order of convergence for the invariant variational
      scheme \eqref{eq: constrained scheme} for various values of $\ell$, and where the number of steps is $\frac{2}{\ell}$,
       subject to the exact
      solution \eqref{eqn:case2} with $\mu=-1$ and
      $\alpha=4$.
    }\label{tab:ee}
    \end{table}
\begin{remark}
One might observe that the error for $i=0$ is not displayed in Table \ref{tab:ee}.  If desired, this quantity can be recovered through Definition \ref{def:eoc} after noting that for $i=0$ we fixed $\ell=0.04$.
\end{remark} 
    
For sake of comparison, we also consider the invariant scheme 
\begin{equation}\label{eq: ee naive scheme}
\kappa^\td_{ss} + \frac{(\kappa^\td)^3}{2\ell^4} + \mu\alpha \ell^2\kappa^\td= 0,
\end{equation}
where, up to factors of $\ell$,
\[
\kappa^\td = \Delta^2u_k \Delta x_k - \Delta^2x_k \Delta u_k,\qquad
\kappa_{ss}^\td = \Delta^4u_{k-2} \Delta x_k + \Delta^3u_{k-1} \Delta^2x_k - \Delta^4x_{k-2} \Delta u_k - \Delta^3x_{k-1} \Delta^2u_k,
\]
are approximations of the curvature and its second arc-length derivative with
\begin{align*}
& \Delta z_k = z_{k+1}-z_k,& &
\Delta^2 z_k = z_{k+2}-2z_{k+1} + z_k,\\
&\Delta^3 z_{k-1} = z_{k+2}-3z_{k+1}+3z_k - z_{k-1},& &
\Delta^4 z_{k-2} = z_{k+2} - 4z_{k+1} + 6z_k - 4z_{k-1} + z_{k-2}.
\end{align*}
We note that \eqref{eq: ee naive scheme} is a straightforward discretization of the Euler elastica equation \eqref{eq:gen Euler} where the variational nature of the equation is omitted.  Finally, we supplement \eqref{eq: ee naive scheme} with the equation
\[
\Delta x_{k+1}^2 + \Delta u_{k+1}^2 = \ell^2
\]
to ensure the distance between points is constant.

In Figure \ref{fig:ee:scheme0} we observe that the invariant variational
scheme \eqref{eq: constrained scheme} successfully completes the loop
when $\mu=-1, \alpha=4$, however, the invariant numerical scheme \eqref{eq: ee naive scheme} fails to decay as $x$ decreases to $-\infty$.
\begin{figure}[H]
  \centering
    \includegraphics[width=0.55\textwidth]{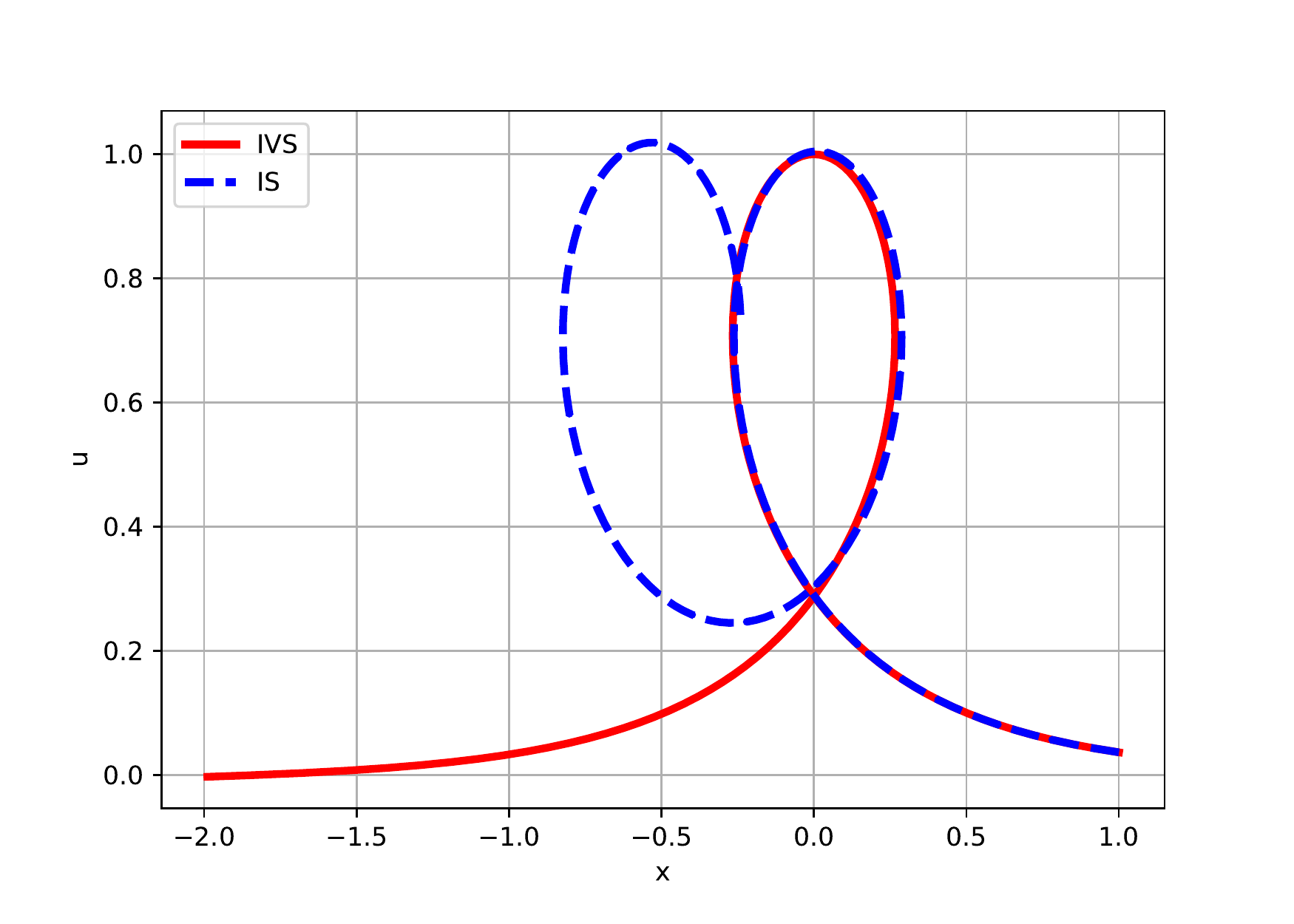}
    \caption{Numerical simulation of solution
      \eqref{eqn:case2}, where $\mu=-1$ and $\alpha=4$, using both
      the invariant variational scheme (IVS) \eqref{eq: constrained scheme} and the
invariant scheme (IS) \eqref{eq: ee naive scheme} with $\ell=0.01$ and the number of steps equal to 500. \label{fig:ee:scheme0}}
  \end{figure}
In Figure \ref{fig:ee:c:scheme0} we plot the deviation of the conserved quantities \eqref{eq: constrained invariants}.
\begin{figure}[H]
  \centering
  \subfigure[][Invariant variational scheme]{
    \includegraphics[width=0.45\textwidth]{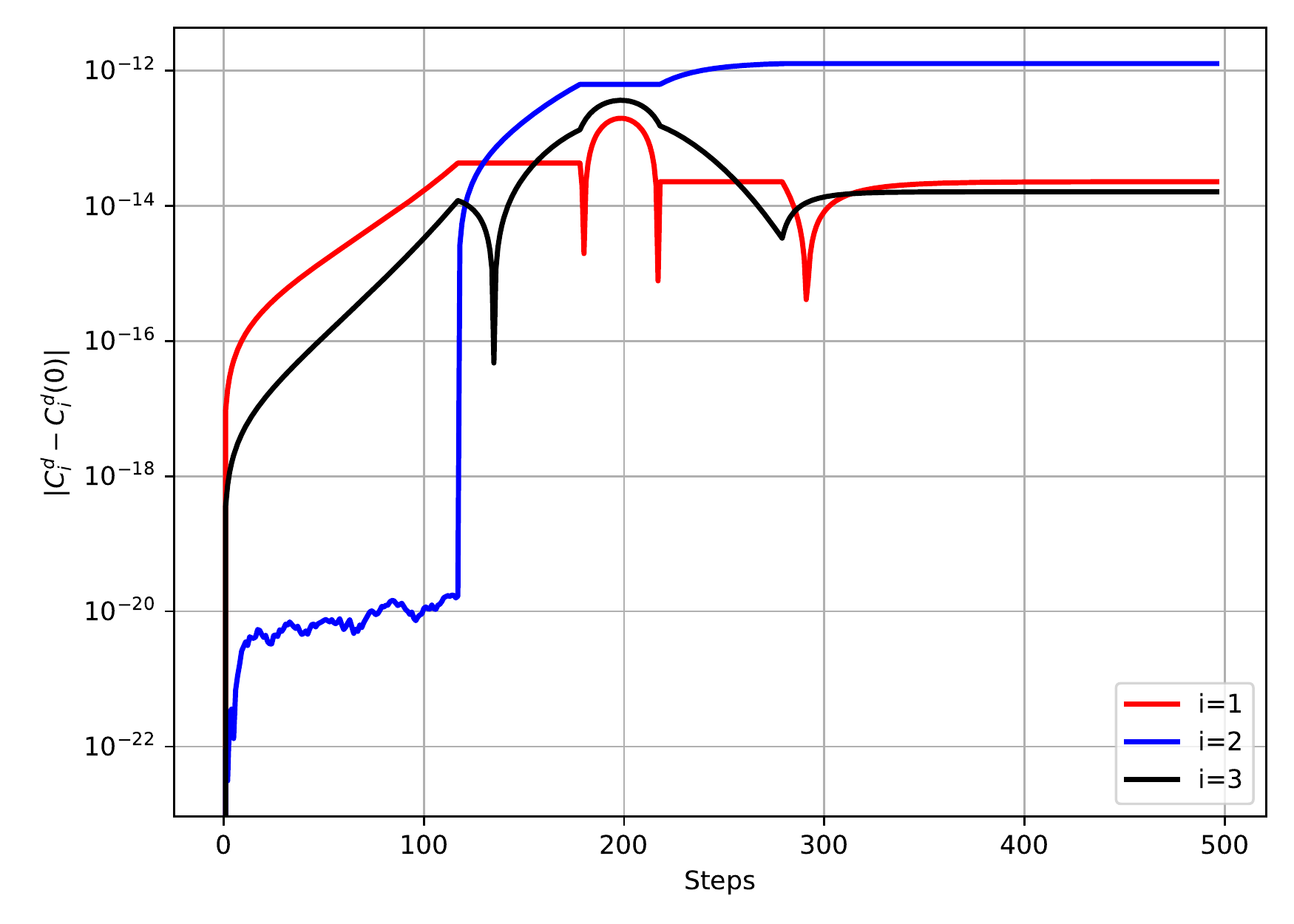}
  }
  \subfigure[][\revise{Invariant scheme}]{
    \includegraphics[width=0.45\textwidth]{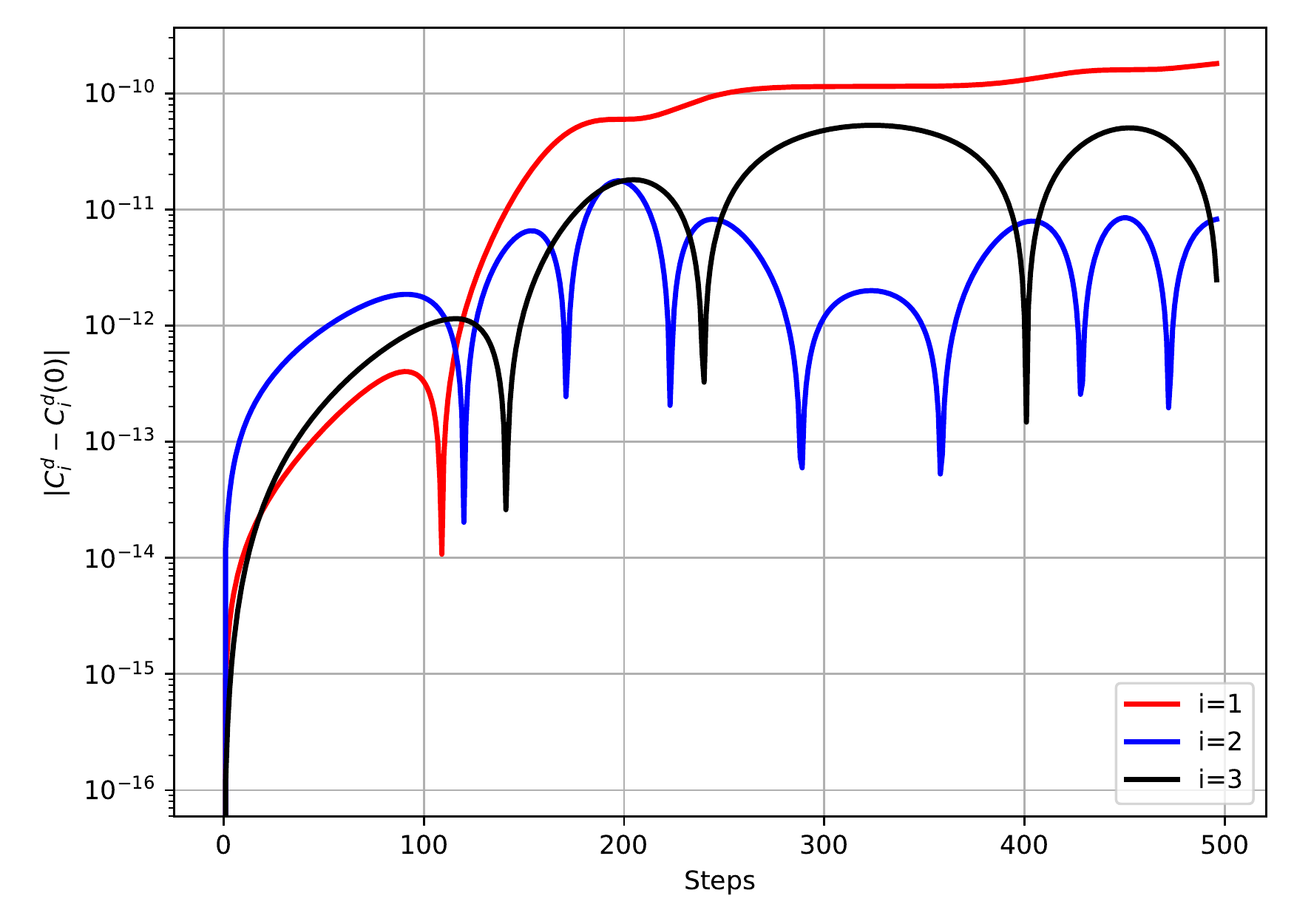}
  }
  \caption{The deviation in the conserved quantities $C_i^\td$, $i=1,2,3$, as
    described by \eqref{eq: constrained invariants}, for the
    invariant variational scheme \eqref{eq: constrained scheme} and
    invariant scheme \eqref{eq: ee naive scheme} simulating \eqref{eqn:case2}
    with $\mu=-1$, $\alpha=4$, and where $\ell=0.01$.
    \label{fig:ee:c:scheme0}}
\end{figure}
We observe that over time the deviation in the conservative scheme \eqref{eq: constrained scheme} propagates to $10^{-12}$ while the invariant scheme \eqref{eq: ee naive scheme} propagates to $10^{-10}$. We note that this deviation remains small due to the order of magnitude of the
conserved quantities themselves, however, we do not look at the relative
deviation here as the errors propagating below solver precision become
significant.

While the schemes \eqref{eq: constrained scheme} and \eqref{eq: ee naive scheme} are both invariant under the special Euclidean group action, the above simulation shows that the scheme which is also variational, and therefore preserves the constants of motion, provides better long term numerical results.  

  For completeness, we also consider a non-invariant
  variational scheme obtained by computing the Euler--Lagrange equations
  of the non-invariant Lagrangian \eqref{eq:discrete-elastica-functional} subject to the 
  constraint $\sqrt{\Delta x_k^2+\Delta u_k^2}=\ell$.  The resulting equations
  are
\begin{equation}\label{eq:noninvariant scheme}
\begin{aligned} 
0 &= \ell^5\bE_u(L_k^c) = \bigg(D_k \Delta x_{k+1} + \frac{5D_k^2\Delta u_k}{2\ell^2}\bigg)\bigg(\frac{\Delta x_k}{\Delta x_{k+1}}\bigg)^{5/2}+ (D_{k-2}\Delta x_{k-2})\bigg(\frac{\Delta x_{k-2}}{\Delta x_{k-1}}\bigg)^{5/2} \\
&-\bigg(D_{k-1}(\Delta x_k + \Delta x_{k-1}) + \frac{5 D_{k-1}^2 \Delta u_{k-1}}{2\ell^2}\bigg)\bigg(\frac{\Delta x_{k-1}}{\Delta x_k}\bigg)^{5/2} - \ell^4\alpha \mu(\Delta u_{k-1}-\Delta u_k),\\
 0 &= \ell^5 \bE_x(L_k^c) = \bigg(-D_k\Delta u_{k+1} + \frac{5D_k^2\Delta x_k}{2\ell^2}-\frac{5D_k^2}{4\Delta x_k}\bigg)\bigg(\frac{\Delta x_k}{\Delta x_{k+1}}\bigg)^{5/2}\\
                           &+\bigg(D_{k-1}(\Delta u_k + \Delta u_{k-1})-\frac{5D_{k-1}^2\Delta x_{k-1}}{2\ell^2}+\frac{5D_{k-1}^2}{4 \Delta x_{k-1}}+\frac{5D_{k-1}^2}{4\Delta x_k}\bigg)\bigg(\frac{\Delta x_{k-1}}{\Delta x_k}\bigg)^{5/2} \\
                           &-\bigg(D_{k-2}\Delta u_{k-2} + \frac{5D_{k-2}^2}{4\Delta x_{k-1}}\bigg)\bigg(\frac{\Delta x_{k-2}}{\Delta x_{k-1}}\bigg)^{5/2}-\ell^4\alpha\mu(\Delta x_{k-1}-\Delta x_k),
\end{aligned}
\end{equation}  
to which we add the constraint equation  $\Delta x_{k+1}^2 + \Delta u_{k+1}^2 - \ell^2=0$.
As for the invariant variational scheme \eqref{eq: constrained
    scheme},  we employ Algorithm \ref{alg:1} to choose the
  optimal combination of the equations to solve. Replicating the experiments showcased in
  Figure \ref{fig:ee:scheme0} and Figure \ref{fig:ee:c:scheme0}, we
  obtain Figure \ref{fig:ee:ni:scheme0}.
  \begin{figure}[H]
    \centering
    \subfigure[][]{
      \includegraphics[height=5.5cm]{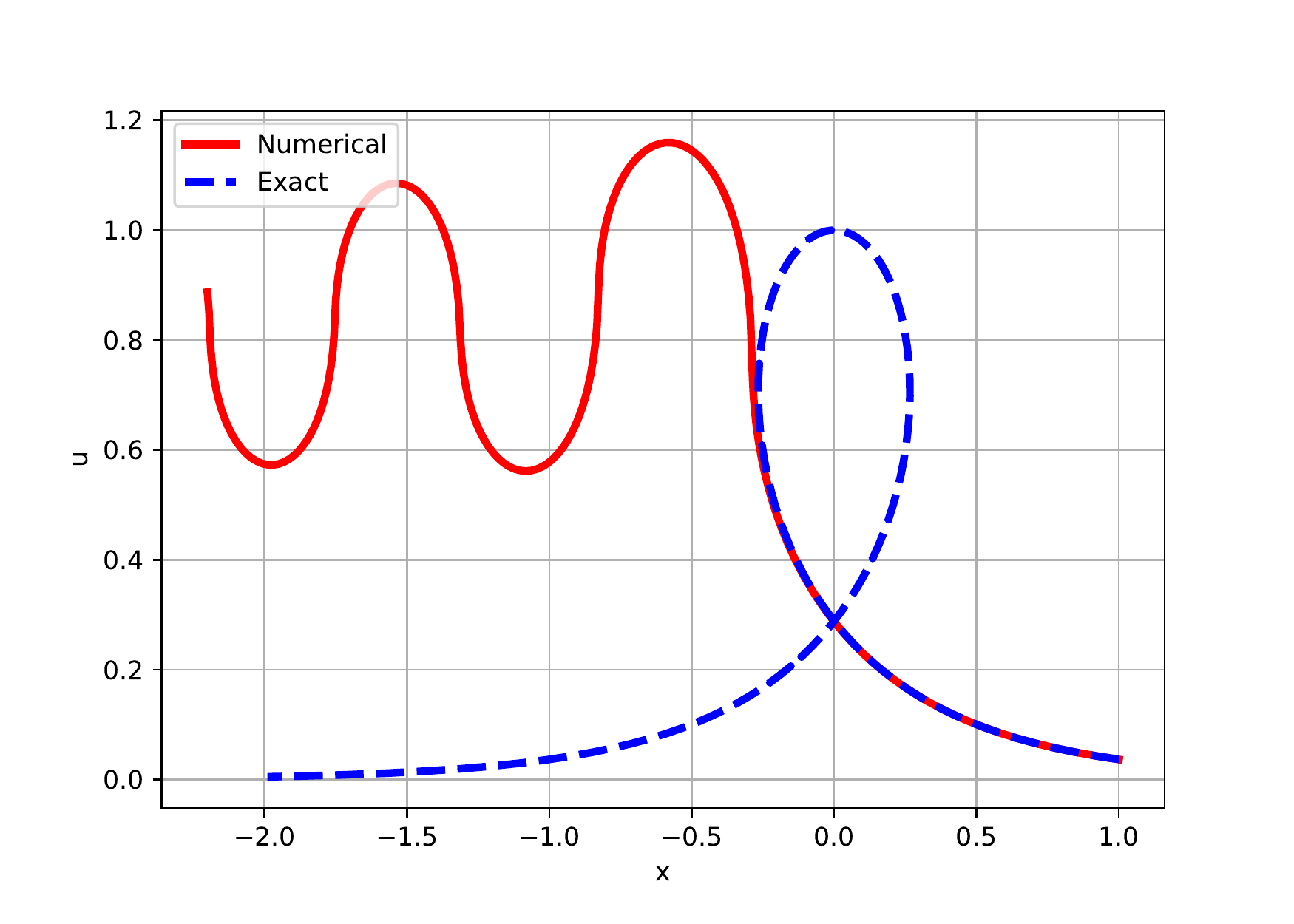}
    }
    \subfigure[][]{
      \includegraphics[height=5cm]{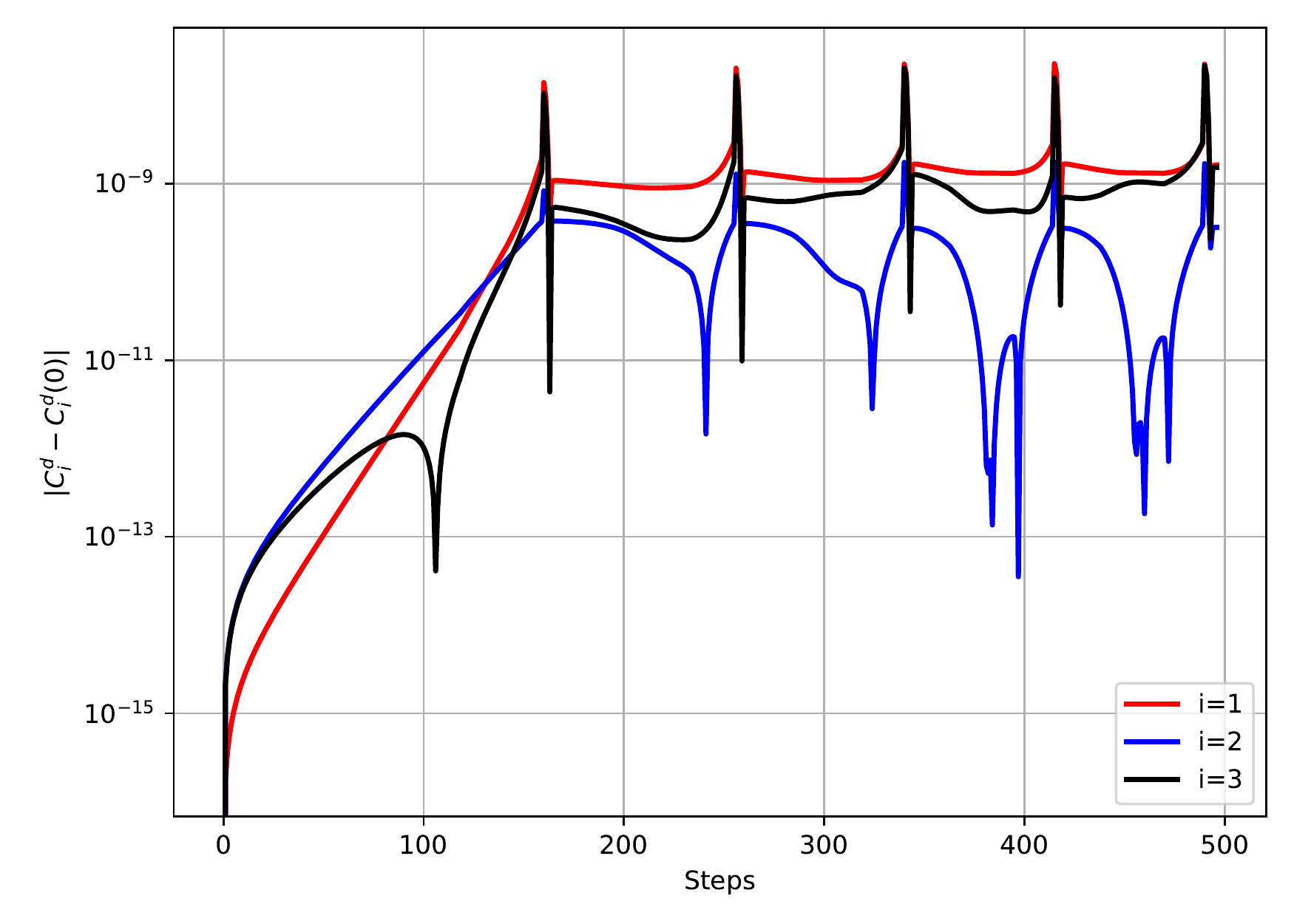}
    }
    \caption{(a) Numerical simulation of solution
      \eqref{eqn:case2}, where $\mu=-1$ and $\alpha=4$, using the
      non-invariant scheme \eqref{eq:noninvariant scheme} compared
      against the exact solution.
      (b) The deviation in the conserved quantities $C_i^\td$, $i=1,2,3$, as
      described by \eqref{eq: constrained invariants}, for the
      non-invariant scheme \eqref{eq:noninvariant scheme}.
      For both simulations we fix $\ell=0.01$ and the number of $\steps=500$.
      \label{fig:ee:ni:scheme0}}
  \end{figure}
  We observe that the solution to the non-invariant scheme diverges when the 
  tangent line to the curve becomes vertical. This was to be expected since in 
  \eqref{eq:noninvariant scheme} the Euler--Lagrange equations are divided by
  $\Delta x_k$ (and $\Delta x_{k+1}$, $\Delta x_{k-1}$).  Furthermore the deviation
  in the conserved quantities is orders of magnitude greater than the invariant
  variational scheme \eqref{eq: constrained scheme}.  

There are a multitude of interesting dynamics exhibited by the Euler
elastica equation which may be simulated by our model. Fixing
$\ell=0.01$ and iterating for 1000 steps, we obtain Figure \ref{fig:dynamics} for different values of $\mu$.
\begin{figure}[h!]
  \centering
  \subfigure[][$\mu=-1.2$]{
    \includegraphics[width=0.31\textwidth]{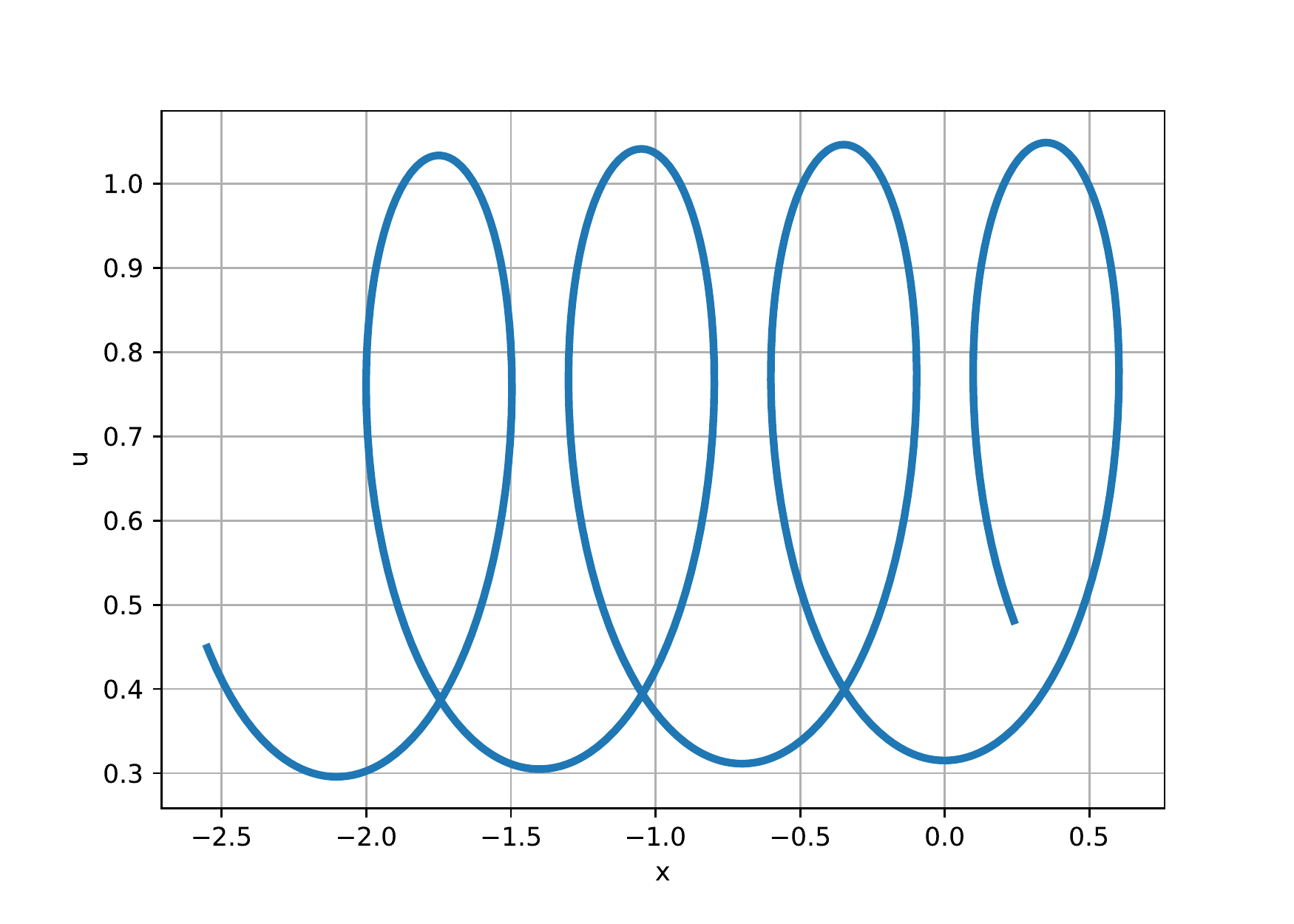}
  }
  \subfigure[][$\mu=0.5$]{
    \includegraphics[width=0.31\textwidth]{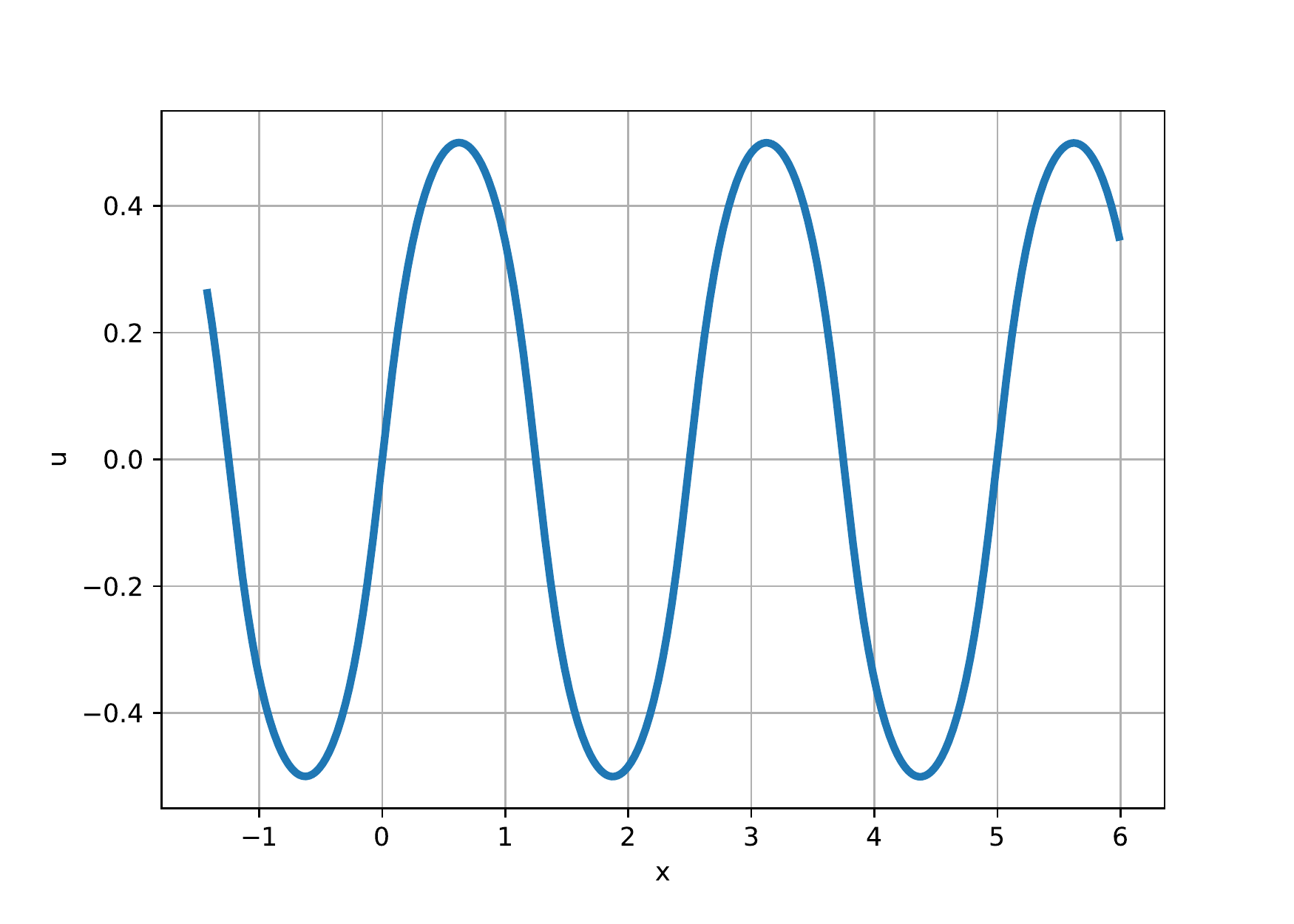}
  }
  \subfigure[][$\mu=0$]{
    \includegraphics[width=0.31\textwidth]{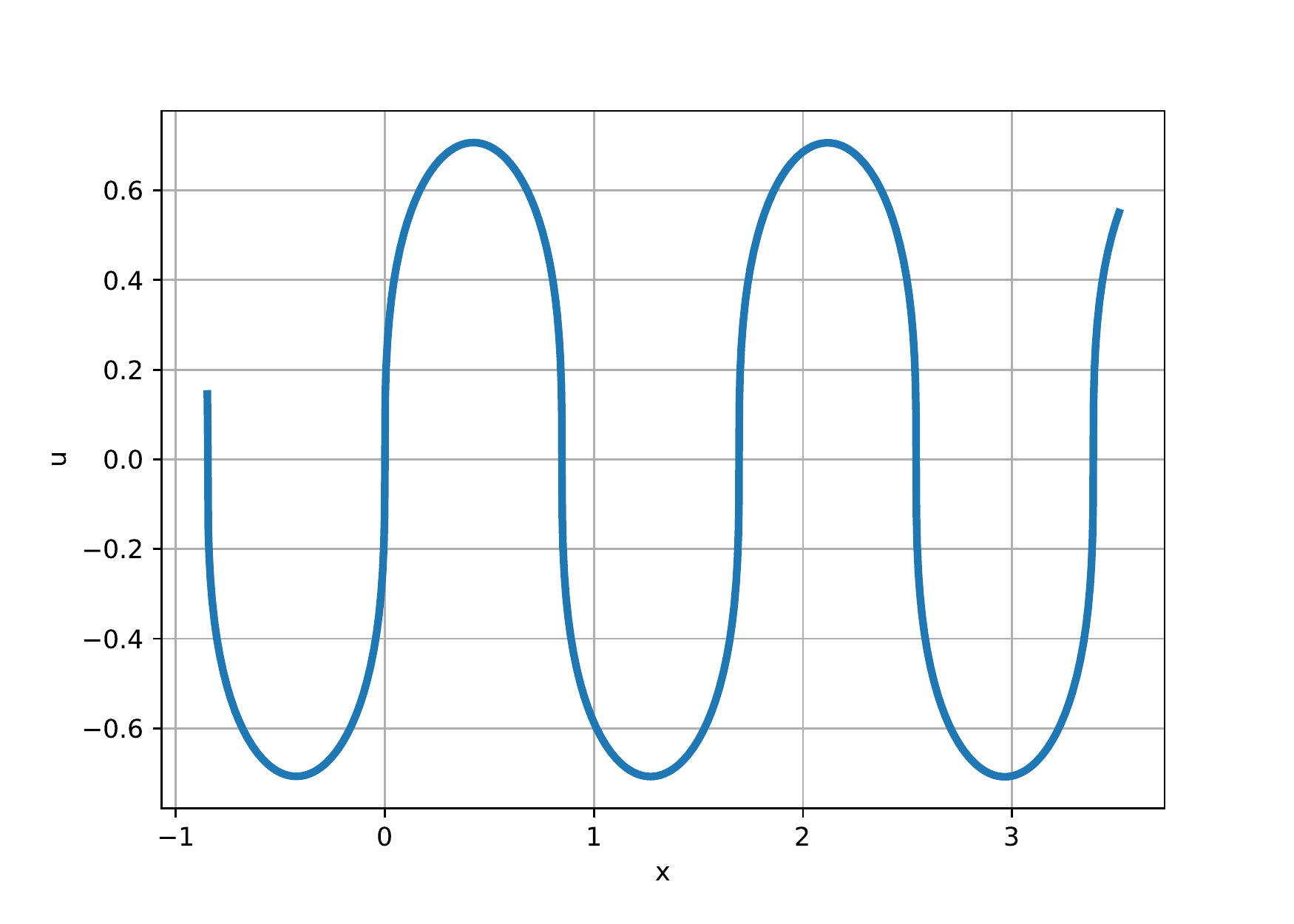}
  }
    \subfigure[][$\mu=-0.4$]{
    \includegraphics[width=0.31\textwidth]{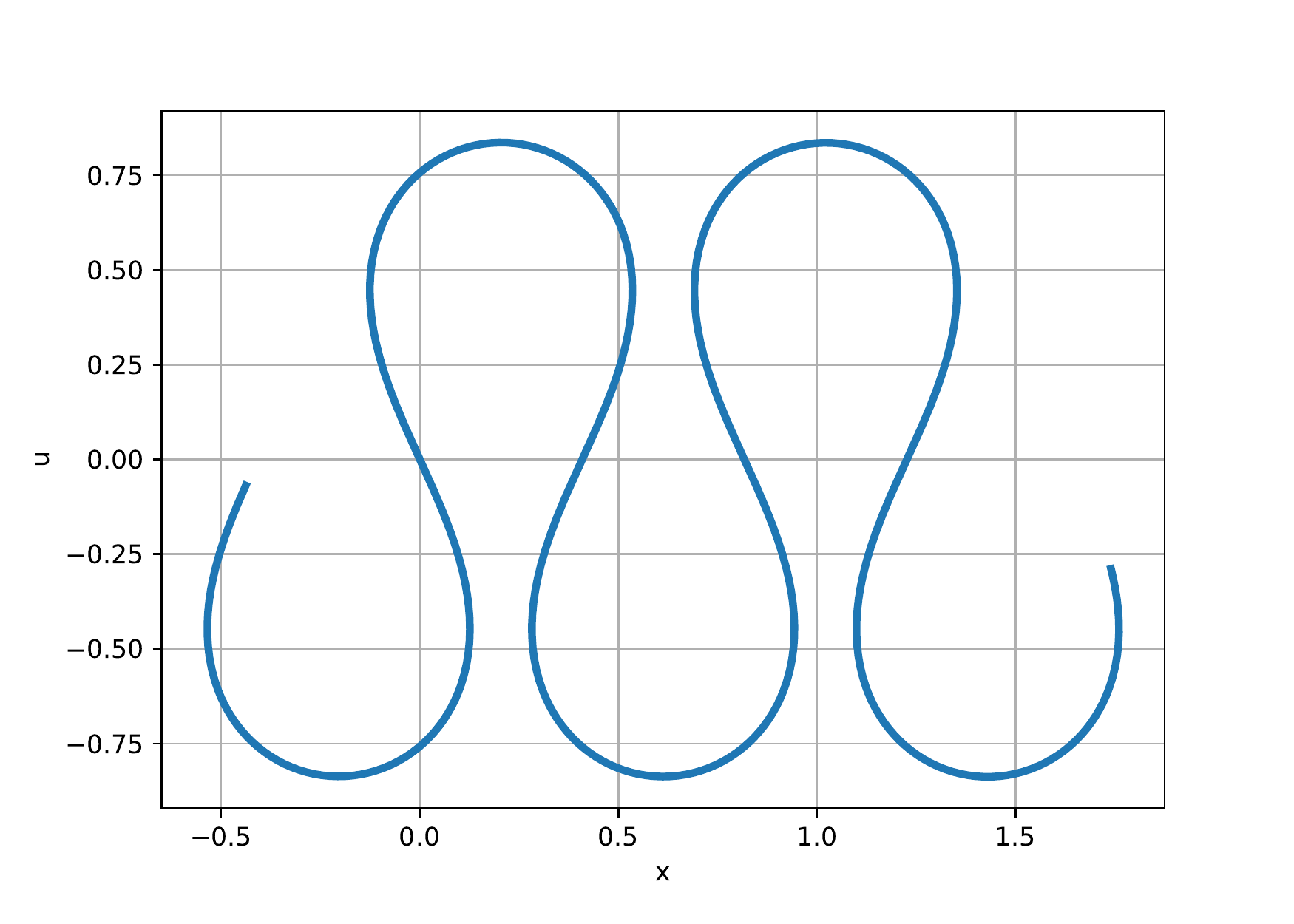}
  }
  \subfigure[][$\mu=-0.65223$]{
    \includegraphics[width=0.31\textwidth]{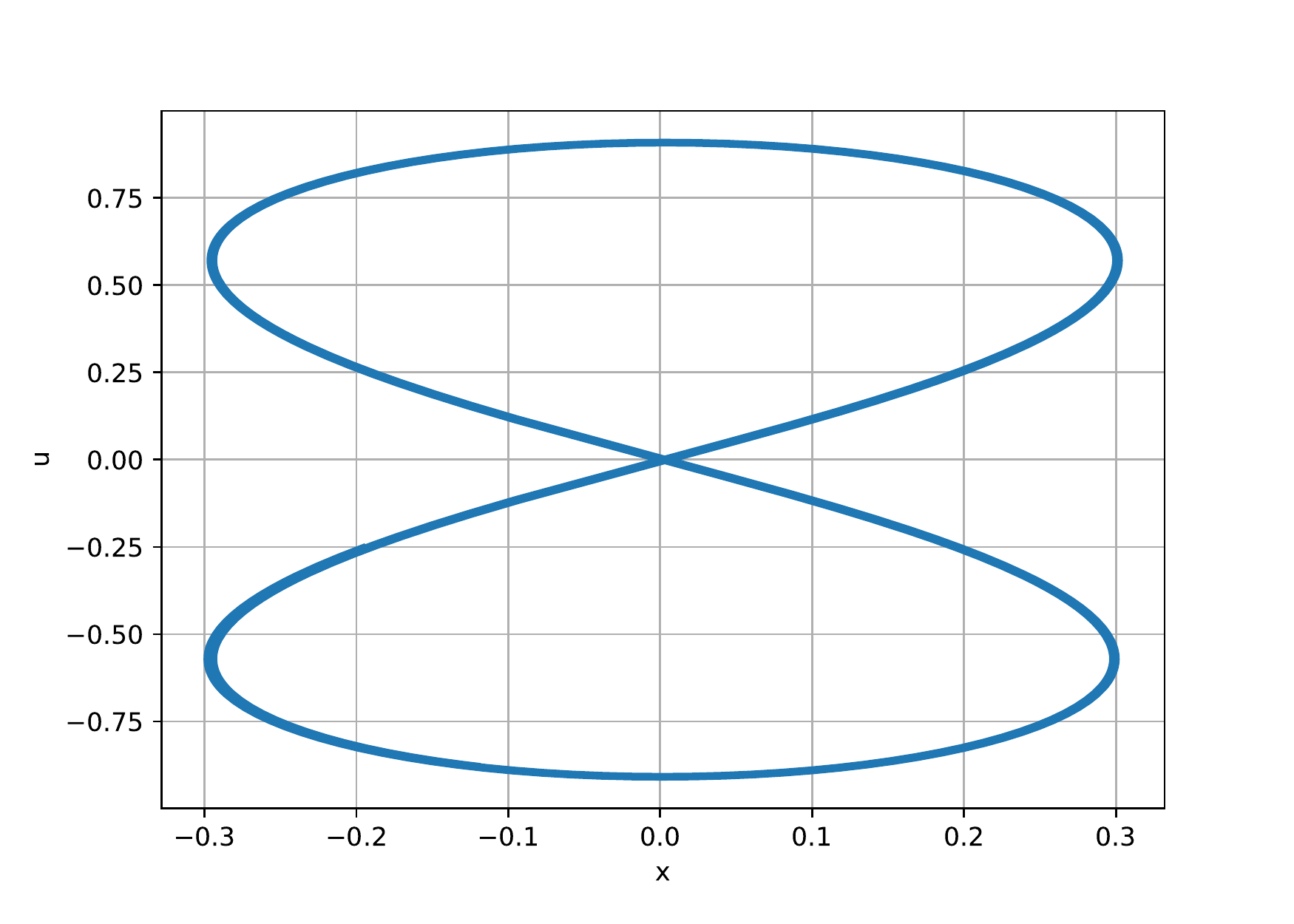}
  }
  \subfigure[][$\mu=-0.9$]{
    \includegraphics[width=0.31\textwidth]{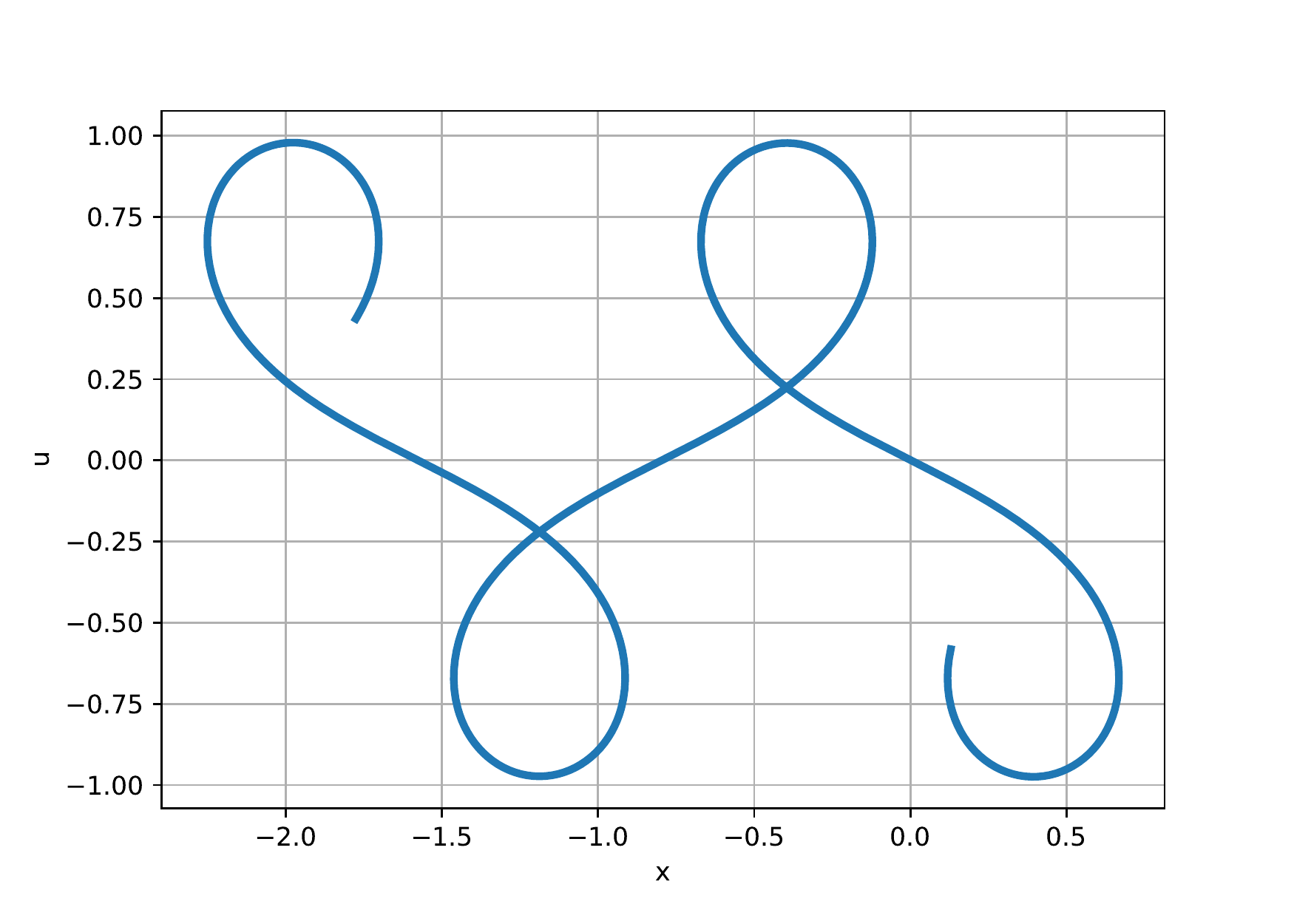}
  }
  \caption{The invariant variational scheme \eqref{eq:
      constrained scheme} with $\alpha=4$ for various $\mu$ values simulating the solutions described in \eqref{eqn:case1}
    and \eqref{eqn:case3}. We initialize all simulations at $s_0=-2$ and
    iterate 1{,}000 steps with $\ell=0.01$. We note that all dynamics
    presented in this figure are accurate, when compared to the exact solutions.
    \label{fig:dynamics}}
\end{figure}

\FloatBarrier
\subsection{Divergence Invariant Lagrangian} \label{sec:dl}

We now shift our focus to the divergence invariant scheme \eqref{eq:
  div invariant scheme}. For comparison, we compare our
invariant approximation against a standard approximation of \eqref{eq:divergent EL} given by
\begin{equation} \label{eq: div standard scheme}
\frac{u_{k+1}-2u_k+u_{k-1}}{h^2} = \frac{1}{u_{k}^3},
\end{equation}
where $\Delta x_k = h$ is constant. We note that the general solution to \eqref{eq:divergent EL} is
\begin{equation} \label{eq: div invariant exact}
  u(x) = \sqrt{ \frac{\left(A x + B \right)^2 + 1}{A}},
\end{equation}
where $A$ and $B$ are constants. For our simulations, we consider the
case where $A=1$ and $B=0$. As the numerical solution $(x_k,u_k)$
evolves according to \eqref{eq: div invariant scheme}, the component
$u_k$ will provide an approximation of exact solution $u(x_k)$.
Therefore, when benchmarking our approximation we only consider the
error in the $u$
component. 
We initialize our simulation by setting $x_0=-1$,
$x_1=-1 + \frac{2}{\steps-1}$ and $u_0=u(x_0)$, $u_1 = u(x_1)$.
Benchmarking our numerical approximation \eqref{eq: div invariant
  scheme} in Table \ref{tab:div} we obtain a quadratic experimental
order of convergence.  Additionally, by design, the
  non-invariant scheme \eqref{eq: div standard scheme} also converges to
  second order.  We note that the quadratic convergence of the invariant variational scheme is better than expected, as the modified Lagrangian
  \eqref{eq:discrete-modified-L} is first order accurate and the
  discrete moving frame \eqref{eq:moving frame 2} is also a first
  order approximation of its continuous counterpart.  This indicates
  that, in this example, an order of accuracy has been gained through
  the invariantization procedure.

\begin{table}[h!]
  \centering
  \scriptsize{
  \begin{tabular}{|c|c||c|c|}
    \hline
    i & $\steps$ &  $\enorm{u_k-u} =: \err{u}$ & $\EOC{\err{u},\steps^{-1};i-1}$ \\[0.05cm]
    \hline
    1 & 200 & 2.32e-6 & 2.04 \\
    2 & 400 & 5.73e-7 & 2.02 \\
    3 & 800 & 1.42e-7 & 2.01 \\
    4 & 1600 & 3.55e-8 & 2.00 \\
    \hline
  \end{tabular}
  \hspace{0.25cm}
  \begin{tabular}{|c|c||c|c|}
    \hline
    $i$ & $\steps$ &  $\err{u}$  & $\EOC{\err{u},\steps^{-1}; i-1}$ \\[0.05cm]
    \hline
    1 & 200 & 8.50e-6 & 2.01 \\
    2 & 400 & 2.11e-6 & 2.00 \\
    3 & 800 & 5.27e-7 & 2.00 \\
    4 & 1600 & 1.32e-7 & 2.00 \\
    \hline
  \end{tabular}
  }
  \caption{The $l_{\infty}$ error and the order of convergence in the $u$
    component for the invariant variational scheme \eqref{eq: div
      invariant scheme} (left) and the standard numerical approximation
    \eqref{eq: div standard scheme} (right) approximating solution \eqref{eq:
      div invariant exact} with $A=1$ and $B=0$.
    \label{tab:div}
  }
\end{table}

Fixing the number of steps to 100, we simulate both the invariant and
standard schemes and compute the conserved quantities in Figure
\ref{fig:devinv}. We note that deviation in the conserved
quantities changes slowly for the invariant variational scheme
remaining on the order of the solver precision ($10^{-13}$),
while for the standard scheme all quantities deviate
  significantly above machine precision with $C_2^\td$ reaching
  $\mathcal{O}\left(10^{-4}\right)$ by the end of the simulation.
\begin{figure}[h!]
  \centering
  \subfigure[][Invariant variational scheme]{
    \includegraphics[width=0.45\textwidth]{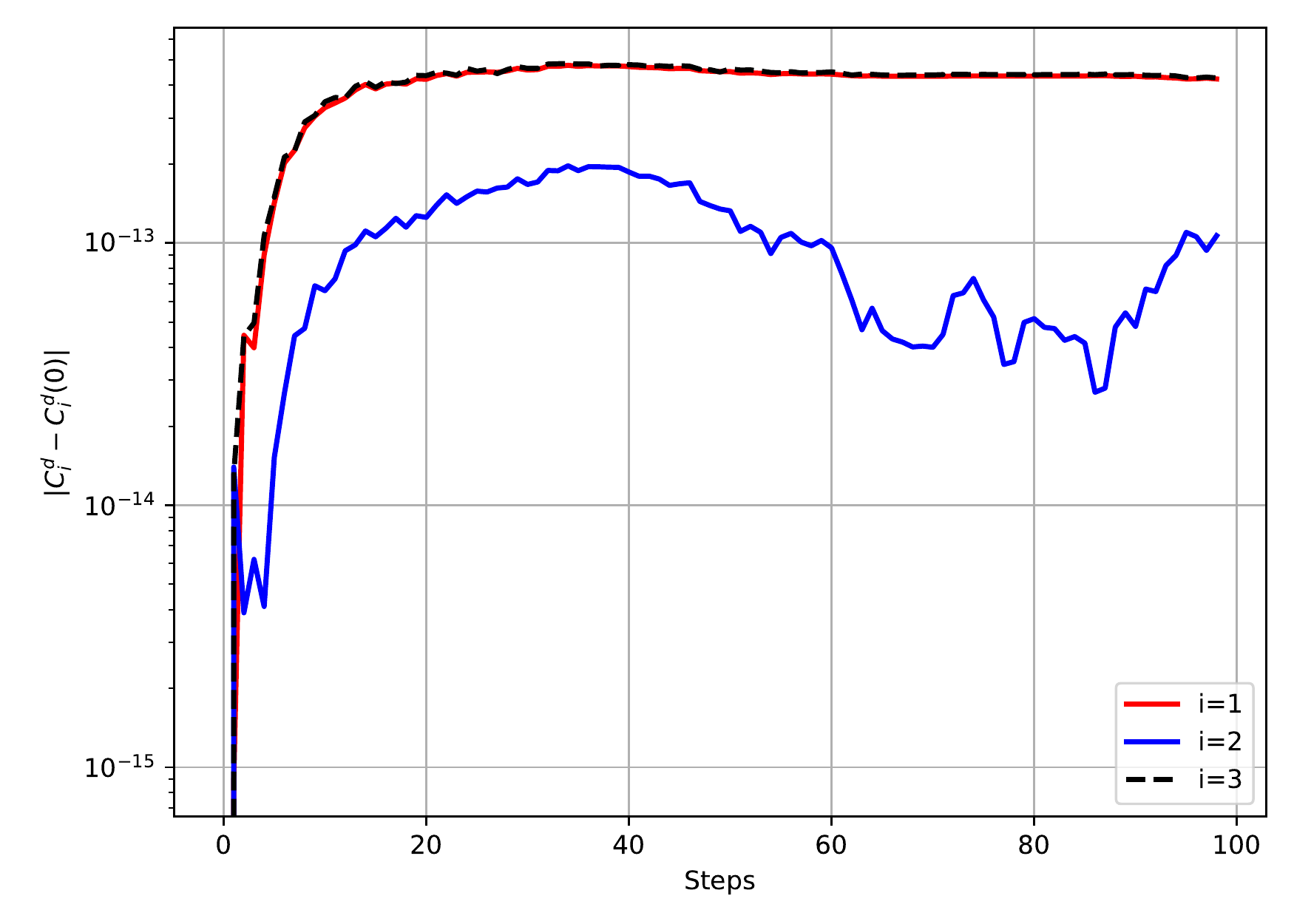}
  }
  \subfigure[][Standard scheme]{
    \includegraphics[width=0.45\textwidth]{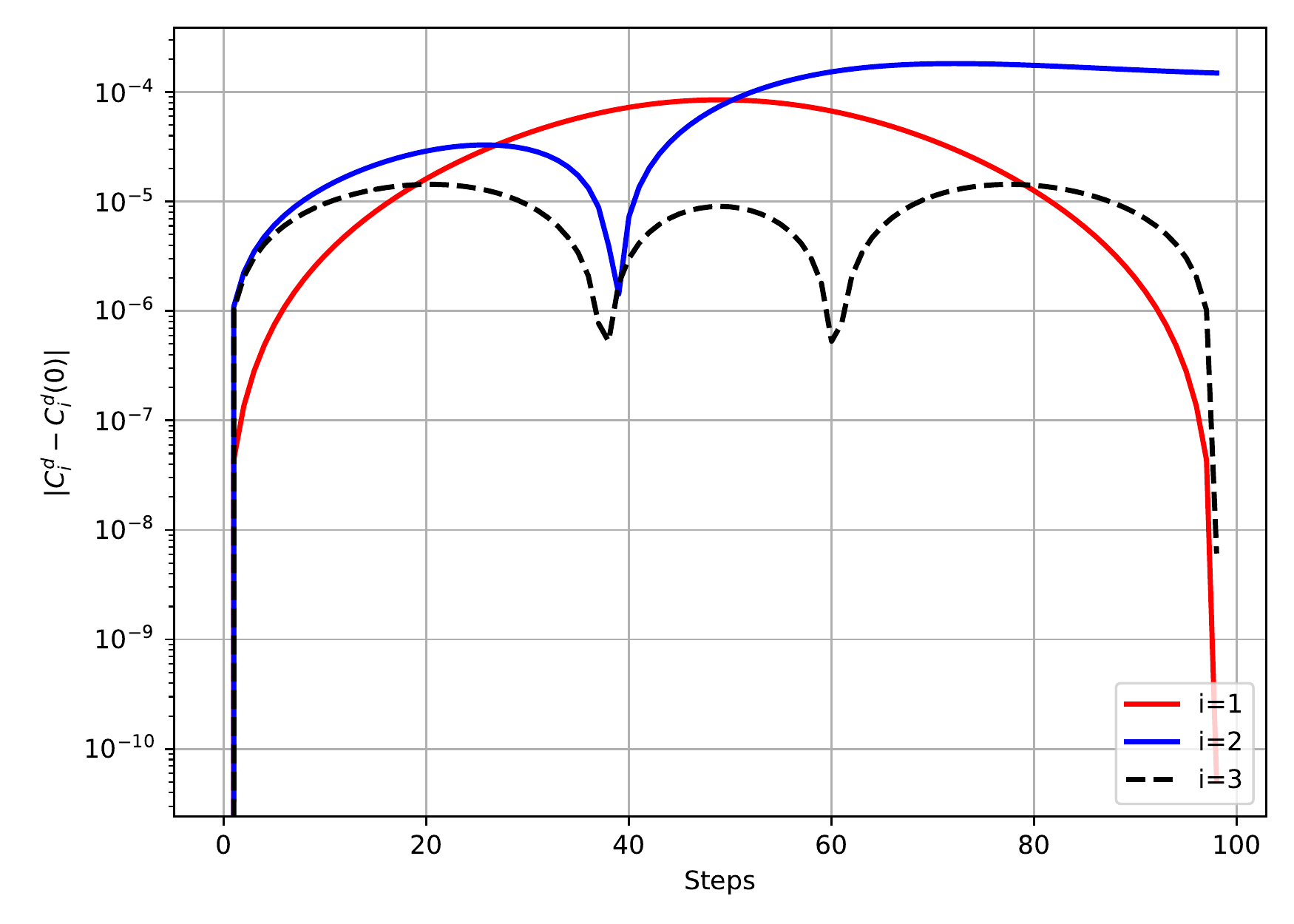}
  }
  \caption{The deviation in the conserved quantities $C_i^\td$, $i=1,2,3$, as described by
    \eqref{eq: div invariants}, for the invariant variational scheme \eqref{eq:
      div invariant scheme} and the non-invariant scheme \eqref{eq: div
      standard scheme}. The simulations are initialized with the exact
    solution at $x_0=-1$ and $x_1=-1-2/99$, and 100 steps are
    implemented. We observe that the deviations for the
    invariant variational scheme remain on the order of solver precision while
    for the non-invariant scheme the deviations quickly propagate.
    \label{fig:devinv}
    }
\end{figure}

\FloatBarrier

\section{Conclusion}

Given a system of ordinary differential equations, one can use the Helmholtz conditions to determine whether or not these coincide with the Euler--Lagrange equations of some Lagrangian, \cite{AT-1992}.  For ordinary differential equations that originate from a variational problem, we introduced a procedure for discretizing the equations so as to preserve both its variational (and divergence) symmetries and its conserved quantities.   This is done in a three step process where we first discretize the continuous Lagrangian to obtain a discrete variational problem.  During this discretization procedure, Lie point symmetries are usually lost.  To recover the lost symmetries we  implement the moving frame method and invariantize the discrete Lagrangian.  The numerical scheme is then obtained by computing the Euler--Lagrange equations of the invariantized Lagrangian.

The invariant variational approach outlined in the previous paragraph offers several advantages over other related geometric integrators.   First, compared to invariant integrators, \cite{B-2013,BJV-2020,BN-2013,BN-2014,BV-2019}, that only focus on preserving the symmetries of the Euler--Lagrange equations, without consideration to its variational origin, the invariant variational schemes constructed in this paper have the additional benefit of preserving the conserved quantities of the problem. By preserving first integrals, the schemes should be more stable and produce better long term numerical results, which is one of the main appealing properties of geometric numerical integrators.  Next, compared to the conservative method introduced in \cite{WBN-2016,WBN-2017}, our construction is simpler to implement and avoids the use of divided difference calculus, which can become challenging at times.  Similarly, the discrete gradient method introduced in \cite{QM-2008}, which requires recasting the system in a skew-gradient form, is nontrivial to implement, in particular for large dynamical systems with many first integrals.  On the other hand, in our approach one can naively discretize a Lagrangian and recover a suitable symmetry-preserving Lagrangian via the algorithmic process of invariantization.

Finally, we note that the methodology developed in this paper can also be applied to partial differential equations.    As for ordinary differential equations, the discrete Euler--Lagrange equations will simultaneously approximate the differential equation and provide equations for the mesh.  Though, as with any symmetry-preserving integrators, the mesh equations might lead to mesh entangle and poor numerical results.  To alleviate these issues one could possibly use invariant $r$-adaptive meshes, \cite{BP-2012} or evolution--projections techniques, \cite{BN-2013}, adapted to the variational framework.  Doing so would require more attention, and we therefore reserve this problem for future considerations elsewhere.

\section*{Acknowledgements}
This research was undertaken, in part, thanks to funding from the Canada Research Chairs program, the InnovateNL LeverageR{\&}D program and the NSERC Discovery program.

\end{document}